\newcommand{\eps}{\varepsilon}
\newcommand{\opnm}{\operatorname}
\newcommand{\arctanh}{\operatorname{arctanh}}
\newcommand\xqed[1]{%
  \leavevmode\unskip\penalty9999 \hbox{}\nobreak\hfill
  \quad\hbox{#1}}
\newtheorem{theorem}{Theorem}
\newtheorem{proposition}[theorem]{Proposition}
\newtheorem{corollary}[theorem]{Corollary}
\newtheorem{lemma}[theorem]{Lemma}
\theoremstyle{definition}
\newtheorem{definition}[theorem]{Definition}
\theoremstyle{remark}
\newtheorem{remark}[theorem]{Remark}
\newtheorem{xexample}[theorem]{Example}
\newenvironment{example}{\begin{xexample}}{\xqed{$\triangle$}\end{xexample}}
\newtheorem*{acknowledgements}{Acknowledgements}
\numberwithin{equation}{section}
\numberwithin{theorem}{section}
\numberwithin{figure}{section}
\title[Norm of NSAHO semigroup]{The norm of the non-self-adjoint harmonic oscillator semigroup}
\author{Joe Viola}
\email{Joseph.Viola@univ-nantes.fr}
\address{Laboratoire de Math\'{e}matiques Jean Leray \\ 2 rue de la Houssini\`{e}re \\ Universit\'{e} de Nantes \\ BP 92208 F-44322 Nantes Cedex 3}
\begin{document}

\begin{abstract}
We identify the norm of the semigroup generated by the non-self-adjoint harmonic oscillator acting on $L^2(\Bbb{R})$, for all complex times where it is bounded. We relate this problem to embeddings between Gaussian-weighted spaces of holomorphic functions, and we show that the same technique applies, in any dimension, to the semigroup $e^{-tQ}$ generated by an elliptic quadratic operator acting on $L^2(\Bbb{R}^n)$. The method used --- identifying the exponents of sharp products of Mehler formulas --- is elementary and is inspired by more general works of L.\ H\"ormander, A.\ Melin, and J.\ Sj\"ostrand.
\end{abstract}

\maketitle

\section{Introduction}

We consider the non-self-adjoint harmonic oscillator, often called the Davies operator after the contributions of E.\ Brian Davies,
\begin{equation}\label{eq_def_NSAHO}
	Q_\theta = -e^{-i\theta}\frac{d^2}{dx^2} + e^{i\theta} x^2
\end{equation}
acting on $L^2(\Bbb{R})$ for $\theta \in (-\frac{\pi}{2}, \frac{\pi}{2})$. We refer the reader to \cite[Section 14.5]{Davies_2007} or \cite[Section VII.D]{Krejcirik_Siegl_Tater_Viola_2014} for an introduction and a summary of recent results. As a (necessarily non-exhaustive) list of articles related to this family of operators, we mention \cite{Exner_1983, Davies_1999a, Davies_1999b, Boulton_2002, Davies_Kuijlaars_2004, Swanson_2004, Pravda-Starov_2006, Bordeaux-Montrieux_2013, Henry_2014, Graefe_Korsch_Rush_Schubert_2015}.

Even though $Q_\theta$ has a compact resolvent and simple real eigenvalues $\{1+2k\,:\, k\in\Bbb{N}\}$, the operator $e^{-tQ_\theta}$, realized as the graph closure starting on the span of its eigenfunctions, is only bounded on $L^2(\Bbb{R})$ when $t \in \overline{\Omega_\theta}$ with
\begin{equation}\label{eq_def_Omega_theta}
	\Omega_\theta = \{\Re t > 0\} \cap \left\{t \in \Bbb{C} \::\: |\arg \tanh t| \leq \frac{\pi}{2} - |\theta| \right\}.
\end{equation}
(This follows from \cite[Section 1.2.1]{Aleman_Viola_2014b}; see proposition \ref{prop_bddness_same}.) Note that $\overline{\Omega_0} = \{\Re t \geq 0\}$, agreeing with the set where $e^{-tQ_0}$ is bounded, and if $\theta \in (-\frac{\pi}{2}, \frac{\pi}{2})$ is nonzero, then $\overline{\Omega_\theta} = \Omega_\theta \cup \frac{i\pi}{2}\Bbb{Z}$.

The purpose of this article is to identify the norm of $e^{-tQ_\theta}$. The problem is trivial for $t = i \pi k \in i\pi \Bbb{Z}$, since then $e^{-i\pi k Q_\theta} = (-1)^k$ is unitary. The same is true for all $t \in \frac{i\pi}{2}\Bbb{Z}$ since, by a parity argument, $e^{-\frac{i\pi}{2}Q_\theta}u(x) = -iu(-x)$; see remark \ref{rem_bdry_Omega}.  Otherwise, for all $t \in \Bbb{C}\backslash i\Bbb{R}$ for which $e^{-tQ_\theta}$ is bounded on $L^2(\Bbb{R})$, we obtain the following formula.

\begin{theorem}\label{thm_NSAHO_norm}
Let $Q_\theta$ be as in \eqref{eq_def_NSAHO} with $|\theta| < \pi/2$, and let $t \in \Omega_\theta$. Write $\phi = \arg \tanh t \in (-\pi, \pi)$ and
\begin{equation}\label{eq_def_Acste}
	A = \frac{1}{2}|\sinh 2t|^2\left(\cos 2\theta + \cos 2\phi\right).
\end{equation}
Then, as an operator in $\mathcal{L}(L^2(\Bbb{R}))$,
\begin{equation}\label{eq_NSAHO_norm}
	\|e^{-tQ_\theta}\| = \left(\sqrt{1+A} + \sqrt{A}\right)^{-1/2}.
\end{equation}
\end{theorem}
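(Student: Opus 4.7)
The plan is to compute $\|e^{-tQ_\theta}\|^2 = \|(e^{-tQ_\theta})^{*}\,e^{-tQ_\theta}\|$ by sharply composing Mehler kernels and identifying the resulting positive self-adjoint product with a rescaled standard harmonic-oscillator semigroup, whose $L^2$ norm is then read off as its top eigenvalue.

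First I would write down the explicit Mehler kernel for $e^{-tQ_\theta}$. Viewing $Q_\theta$ as the standard harmonic oscillator conjugated by the complex dilation $x\mapsto e^{-i\theta/2}x$, one obtains
\[
K_t^\theta(x,y) = \frac{1}{\sqrt{2\pi\sinh 2t}}\exp\left(-\frac{e^{i\theta}}{2\sinh 2t}\bigl((x^2+y^2)\cosh 2t - 2xy\bigr)\right),
\]
with branches chosen so that this is a bounded operator for $t\in \Omega_\theta$. Since $Q_\theta^{*}=Q_{-\theta}$, the adjoint $(e^{-tQ_\theta})^{*}=e^{-\bar t Q_{-\theta}}$ has the analogous kernel under $(t,\theta)\mapsto(\bar t,-\theta)$. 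I would then compose the two kernels and carry out the resulting Gaussian integral in the intermediate variable. Because the product is positive self-adjoint, the complex coefficients in the exponent must conspire to give a real, positive-definite quadratic form in $(x,y)$, i.e.\ a kernel of the form $c(t,\theta)\exp(-\alpha(x^2+y^2)+2\beta xy)$ with $\alpha,\beta>0$.

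Such a Gaussian kernel is itself a Mehler kernel for some rescaled oscillator $H_\mu=-\partial_x^2+\mu^2 x^2$ at a real time $s>0$: matching coefficients determines $\mu$, $s$, and a prefactor. Since $H_\mu$ has smallest eigenvalue $\mu$, one obtains $\|e^{-tQ_\theta}\|^2 = c(t,\theta)\,e^{-s\mu}$. The target identity
$(\sqrt{1+A}+\sqrt{A})^{-1}=\sqrt{1+A}-\sqrt{A}$, together with the substitution $A=\sinh^2\lambda$ giving $\sqrt{1+A}-\sqrt{A}=e^{-\lambda}$, strongly suggests $c(t,\theta)=1$ and $\lambda=s\mu$, so the sought formula is equivalent to the clean claim $\sinh^2(s\mu)=A$. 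Writing $\tanh t=|\tanh t|e^{i\phi}$ and using duplication identities for $\sinh 2t$ and $\cosh 2t$, the explicit values of $s$ and $\mu$ from the matching step should then unpack into $A=\tfrac{1}{2}|\sinh 2t|^2(\cos 2\theta+\cos 2\phi)$.

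The main obstacle is the sharp identification at the composition step. One must verify structurally that after the Gaussian integration, the imaginary contributions coming from $e^{i\theta}/\sinh 2t$ and $e^{-i\theta}/\sinh 2\bar t$ exactly cancel and the prefactors recombine into the specific Mehler form $\sqrt{\mu/(2\pi\sinh 2\mu s)}$ with no extra multiplicative constant; everything else is careful bookkeeping. Once the product is pinned down as a pure Mehler kernel, the remaining task is the hyperbolic/trigonometric algebra of converting $s\mu$ and the coefficients $\alpha,\beta$ — which come out naturally as complex expressions in $t$ and $\theta$ — into the final closed form involving $|\sinh 2t|^2$, $\cos 2\theta$ and $\cos 2\phi$. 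The appearance of $\phi=\arg\tanh t$ in the answer indicates that the natural working variable throughout the algebra is $\tanh t$ in polar form, so I would set this choice at the start of the computation.
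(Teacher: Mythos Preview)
Your overall strategy---compute the kernel of $(e^{-tQ_\theta})^{*}e^{-tQ_\theta}$ and identify it with a rescaled harmonic-oscillator semigroup---is exactly the paper's approach, so the plan is sound. But there is a genuine gap in the step where you pass from ``positive self-adjoint'' to ``real exponent.''

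Self-adjointness of an integral operator only forces $\overline{K(x,y)}=K(y,x)$; for a Gaussian kernel this means $K(x,y)=c\exp(-ax^2-\bar a y^2+2\beta xy)$ with $c,\beta$ real but $a=\alpha+i\gamma$ allowed to be complex. The term $i\gamma(x^2-y^2)$ is perfectly compatible with positivity and self-adjointness. And it does appear here: for $t>0$ real and $\theta\neq 0$, a direct computation of your Gaussian integral gives the $x^2$-coefficient
\[
\frac{e^{-2i\theta}}{4\sinh 2t\,\cosh 2t\,\cos\theta}-\frac{e^{-i\theta}\cosh 2t}{2\sinh 2t},
\]
whose imaginary part is nonzero (to first order in $\theta$ it is $\tfrac{1}{2}\theta\tanh 2t$). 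Equivalently, $e^{-tQ_{-\theta}}e^{-tQ_\theta}$ and $e^{-tQ_\theta}e^{-tQ_{-\theta}}$ are different operators, so $K(x,y)\neq K(y,x)$ and $K$ cannot be real-valued. The fix is to conjugate by the unitary multiplication $e^{i\gamma x^2}$ before matching to a standard Mehler kernel; this is easy, but it means your subsequent bookkeeping (and in particular the claim $c(t,\theta)=1$, which you currently only infer from the shape of the answer) must carry the extra parameter $\gamma$ along.

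The paper avoids this entirely by working with the \emph{Weyl symbol} rather than the integral kernel: the Weyl symbol of a self-adjoint operator is automatically real, so the sharp product $\overline{M_{-tq_\theta}}\sharp M_{-tq_\theta}$ is $c\,e^{p(x,\xi)}$ with $p$ a real quadratic form on phase space from the start. The remaining reduction is then just symplectic diagonalization of $p$ via its fundamental matrix, which boils down to computing a single determinant and checking $\det F=A/(1+A)$; the prefactor then matches the Mehler normalization automatically. Your kernel route can be completed, but the Weyl-symbol route buys exactly the cancellation you were hoping for, for free.
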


\begin{figure}
  \centering
    \includegraphics[width=0.3\textwidth]{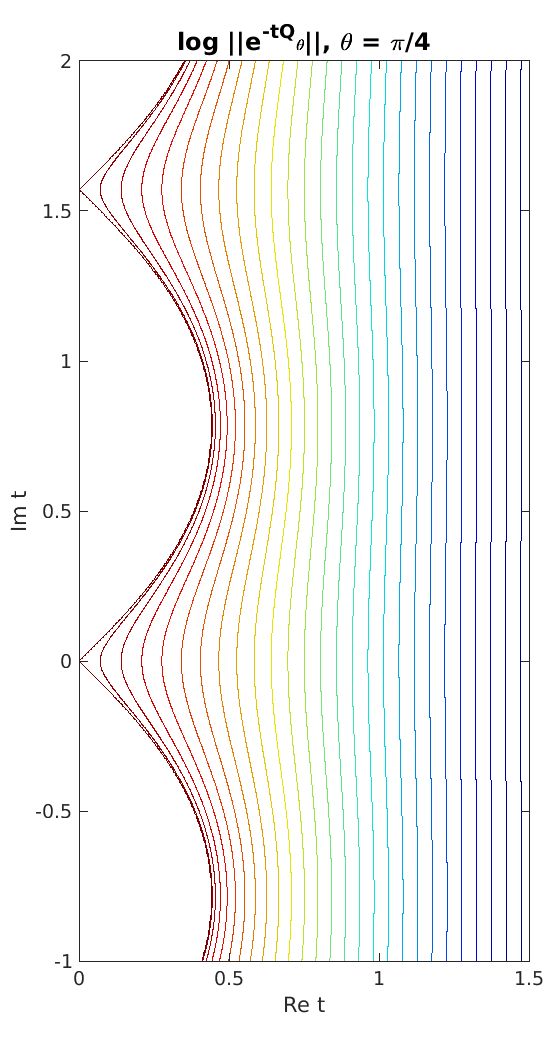}
	\includegraphics[width=0.3\textwidth]{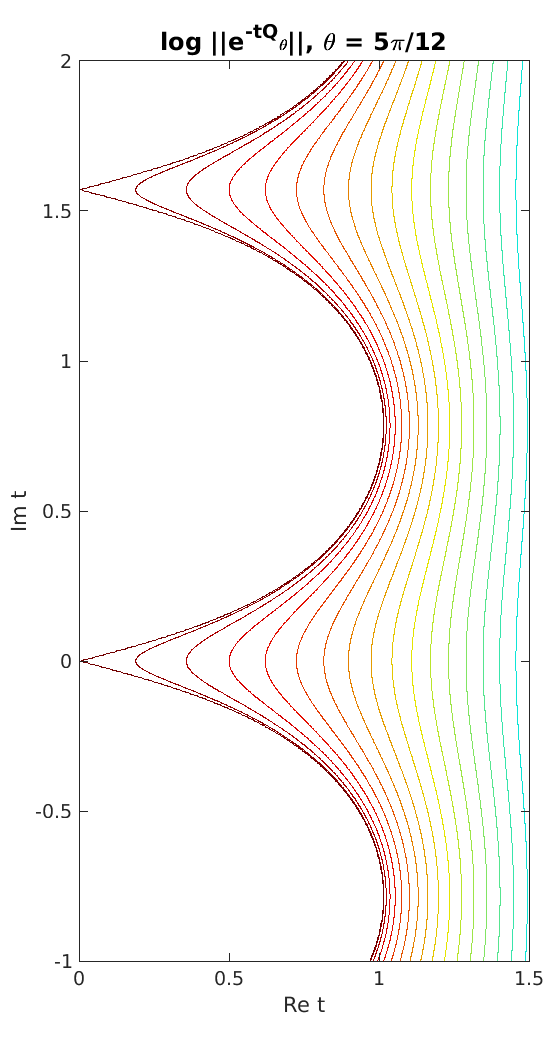}
  \caption{$\log\|e^{-tQ_\theta}\|$ for $\theta = \frac{\pi}{4}, \frac{5\pi}{12}$ and contour lines $0, -0.05, -0.1, \dots, -1.3$.}
	\label{f_Contour}
\end{figure}

A related problem concerns embeddings between spaces of holomorphic functions with Gaussian weights.  Let the weight $\Phi:\Bbb{C} \to \Bbb{R}$ be real-quadratic and strictly plurisubharmonic, meaning that
\begin{equation}\label{eq_stplsh}
	\partial_{\bar{z}}\partial_z \Phi = \frac{1}{4}(\partial_{\Re z}^2 + \partial_{\Im z}^2)\Phi > 0.
\end{equation}
(Derivatives with respect to the complex variable $z$ are assumed throughout to be holomorphic.) Then we define
\begin{equation}\label{eq_def_HPhi}
	H_\Phi = \opnm{Hol}(\Bbb{C}) \cap L^2(\Bbb{C}, e^{-2\Phi(z)}\,d\Re z \,d \Im z),
\end{equation}
the space of holomorphic functions with finite weighted-$L^2$ norm
\[
	\|u\|_\Phi = \left(\int_{\Bbb{C}} |u(z)|^2 e^{-2\Phi(z)}\,d\Re z \,d \Im z\right)^{1/2}.
\]
Given any two such weights $\Phi_1$ and $\Phi_2$, we consider the norm of the embedding
\[
	H_{\Phi_1} \ni u \stackrel{\iota}{\mapsto} u \in H_{\Phi_2},
\]
which is bounded if and only if $\Phi_1 \leq \Phi_2$ (see e.g.\ \cite[Proposition 2.4, Corollary 2.6]{Aleman_Viola_2014b}).

Finding this norm is equivalent to finding the norm of $e^{-tQ_\theta}$ in view of proposition \ref{prop_NSAHO_FBI_side}. In addition, $\|\iota\|$ represents a sort of anisotropic uncertainty principle for holomorphic functions: clearly $\|\iota\| \leq 1$, and if functions in $H_{\Phi_1}$ could concentrate arbitrarily closely to the origin, then $\|\iota\|$ would equal $1$. When $\partial_z^2 \Phi_j = 0$ for $j=1,2$ (in any dimension), the connection between the norm of the embedding and the uncertainty principle (in the sense of the minimum of the spectrum of the harmonic oscillator $Q_0$) is discussed in \cite[Section 4.3]{Aleman_Viola_2014b}.

\begin{theorem}\label{thm_HPhi_embedding}
Let $\Phi_1$ and $\Phi_2$ be two real-valued real-quadratic forms which are strictly plurisubharmonic as in \eqref{eq_stplsh}. Define
\[
	a = \frac{\partial_{\bar{z}}\partial_z \Phi_2}{\partial_{\bar{z}}\partial_z \Phi_1}, \quad b = \frac{|\partial_z^2(\Phi_2-\Phi_1)|}{\partial_{\bar{z}}\partial_z\Phi_1}.
\]
Let $\iota$ be the embedding from $H_{\Phi_1}$ into $H_{\Phi_2}$, which is bounded if and only if $a -b \geq 1$, that is to say, if and only if $\Phi_1 \leq \Phi_2$.

If $b = 0$, then
\[
	\|\iota\| = a^{-1/2}.
\]

If $a-b = 1$ and $b > 0$, then
\[
	\|\iota\| = a^{-1/4}.
\]

Finally, if $a-b>1$, then for
\[
	\gamma = \frac{1}{2b}\left(1 - a^2 + b^2 + \sqrt{(a^2 - b^2 - 1)^2 - 4b^2}\right),
\]
\[
	\|\iota\| = \left(\frac{1-\gamma^2}{a^2 - (b - \gamma)^2}\right)^{1/4}.
\]
\end{theorem}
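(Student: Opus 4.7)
The strategy is to reduce Theorem \ref{thm_HPhi_embedding} to Theorem \ref{thm_NSAHO_norm} via Proposition \ref{prop_NSAHO_FBI_side}, which identifies $\iota\colon H_{\Phi_1}\to H_{\Phi_2}$ as unitarily equivalent to $e^{-tQ_\theta}$ on $L^2(\Bbb R)$ for suitable $(t,\theta)$ determined by $(\Phi_1,\Phi_2)$. Granted that equivalence, $\|\iota\|=\|e^{-tQ_\theta}\|=(\sqrt{1+A}+\sqrt{A})^{-1/2}$ by Theorem \ref{thm_NSAHO_norm}, where $A=\tfrac{1}{2}|\sinh 2t|^2(\cos 2\theta+\cos 2\phi)$. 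The problem then reduces to (i) matching parameters so as to express $A$ in terms of $(a,b)$, and (ii) simplifying the resulting radical to the closed form in the statement.

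For (i), I would first normalize. The unitary isomorphisms of $H_\Phi$-type spaces given by precomposition with a rotation $z\mapsto e^{i\alpha}z$ and by multiplication by a holomorphic Gaussian $e^{\gamma_0 z^2}$ both preserve the invariants $a$ and $b$ and can be composed to reduce to the canonical form
\[
\Phi_1(z)=|z|^2,\qquad \Phi_2(z)=a|z|^2+b\,\Re(z^2),\quad b\geq 0.
\]
Solving the system that produces this pair as the weights associated (via the FBI correspondence) to the Mehler kernel of $e^{-tQ_\theta}$ should yield $|\sinh 2t|$, $\cos 2\theta$, and $\cos 2\phi$ as functions of $(a,b)$, possibly modulo a one-parameter gauge ambiguity that $\|\iota\|$ is insensitive to.

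The three cases of Theorem \ref{thm_HPhi_embedding} then correspond to: (a) $b=0$, where the problem collapses to the pure Fock embedding $H_{|z|^2}\hookrightarrow H_{a|z|^2}$ whose norm $a^{-1/2}$ is attained on constants; (b) $a-b=1$, where the matched $(t,\theta)$ lies on $\partial\Omega_\theta$, the general formula degenerates (formally $\gamma=-1$), and a limit or direct computation on the degenerate extremizer yields $a^{-1/4}$; and (c) $a-b>1$, where the auxiliary $\gamma$ of the statement arises from a critical-point equation in the matching, with the displayed discriminant $(a^2-b^2-1)^2-4b^2$ indicating that $\gamma$ is one of two roots of a quadratic determined by $(a,b)$. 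The principal technical obstacle is the algebraic identification: verifying the identity $(\sqrt{1+A}+\sqrt{A})^{-2}=(1-\gamma^2)/(a^2-(b-\gamma)^2)$ after substitution, and pinning down the correct branch of the square root defining $\gamma$ as the one selected by the bounded-embedding constraint $a-b\geq 1$.
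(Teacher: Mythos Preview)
Your strategy is plausible but differs substantively from the paper's, and it leaves the hardest step undone. Both you and the paper begin with the same normalization (Lemma~\ref{lem_weight_reduction}) and both invoke the unitary equivalence with $e^{-tQ_\theta}$ (Proposition~\ref{prop_NSAHO_FBI_side}). From there the routes diverge. You propose to substitute the matched $(t,\theta)$ into the formula of Theorem~\ref{thm_NSAHO_norm} and then algebraically identify $(\sqrt{1+A}+\sqrt{A})^{-2}$ with $(1-\gamma^2)/(a^2-(b-\gamma)^2)$; you acknowledge this identification as ``the principal technical obstacle'' and do not carry it out. The paper instead uses the NSAHO connection only \emph{structurally}: by Theorem~\ref{thm_NSAHO_SVD}, $(\iota^*\iota)^{1/2}$ is unitarily equivalent to a harmonic oscillator (or heat) semigroup, and therefore by Proposition~\ref{prop_gaussians} its norm is attained on the family of Gaussians $u_\gamma$. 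One then computes directly
\[
\frac{\|u_\gamma\|_{\Phi_2}}{\|u_\gamma\|_{\Phi_1}}=\left(\frac{1-|\gamma|^2}{a^2-|b-\gamma|^2}\right)^{1/4}
\]
and maximizes over $|\gamma|<1$ by elementary calculus; the $\gamma$ in the statement is precisely the critical point of this one-variable optimization.

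This comparison reveals a genuine gap in your account: in your framework $\gamma$ has no intrinsic meaning---it would have to be introduced as an ad hoc algebraic substitution to simplify a radical, and your remark that it ``arises from a critical-point equation in the matching'' is not correct as stated, since your matching of $(t,\theta)$ to $(a,b)$ involves no optimization. In the paper, by contrast, $\gamma$ is the parameter of the extremizing Gaussian, which both explains its form and makes the boundary cases $b=0$ (maximum at $\gamma=0$) and $a-b=1$ (maximum in the limit $\gamma\to -1$) transparent. Your route could presumably be completed, but the algebraic verification you defer is nontrivial, and the paper's approach bypasses it entirely.
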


While the results above focus on the non-self-adjoint harmonic oscillator in one dimension, the same analysis allows us to determine the norm of $e^{-tQ}$ as an operator in $L^2(\Bbb{R}^n)$ when $t > 0$ and $Q(x,D_x)$ is any elliptic quadratic operator, regardless of the dimension $n$; see in particular corollary \ref{cor_higher_dim_Mehler} below.

The strategy followed here is straightforward and takes its inspiration from deeper works like \cite{Hormander_1983, Hormander_1995}. In the case of theorem \ref{thm_NSAHO_norm}, when $e^{-tQ_\theta}$ is compact, the sharp product of its Mehler formula and that of its adjoint must give, up to symplectic equivalence and a factor depending on $t$ and $\theta$, the Mehler formula coming from a harmonic oscillator. We identify this harmonic oscillator and obtain the norm as an immediate consequence. While the formulas obtained are explicit, it seems clear that a deeper analysis could be performed to understand results obtained through sometimes opaque direct computations, particularly in higher dimensions.

The plan of the paper is as follows.  In the following section, we introduce the standard tools of the Weyl quantization and Mehler formulas and establish some simple results around the harmonic oscillator semigroup and Mehler formulas of real quadratic type. Next, in section \ref{sec_NSAHO}, we perform the computations leading to theorem \ref{thm_NSAHO_norm}. In section \ref{sec_FBI}, we explain the relationship between the non-self-adjoint harmonic oscillator semigroup and embeddings between Gaussian-weighted spaces by recalling the dimension-one FBI--Bargmann theory and we prove theorem \ref{thm_HPhi_embedding}. Finally, in section \ref{sec_higher_dimension}, we discuss the natural extension of these results to dimensions $n > 1$.

\begin{acknowledgements} The author would like to thank Johannes Sj\"ostrand for helpful advice, including pointing out reference \cite{Hormander_1983}, and Bernard Helffer for many interesting and illuminating discussions. The author is also indebted to the anonymous referee for many helpful suggestions and improvements to the present work. The author is also grateful for the support of the Agence Nationale de la Recherche (ANR) project NOSEVOL, ANR 2011 BS01019 01.
\end{acknowledgements}

\section{The Weyl quantization, Mehler formulas, and their sharp products}

We briefly recall the essential tools of the Weyl quantization, composition of symbols, and the quantization of the linear symplectic group. Being primarily interested in dimension one and symbols which are either polynomials or bounded with all derivatives, we only present a sketch of the general theory, which may be found in, for instance, \cite[Sections 18, 21]{Hormander_ALPDO_3}.

When $a\in \mathscr{S}'(\Bbb{R}^{2n})$, the Weyl quantization may be defined weakly for $u,v \in \mathscr{S}(\Bbb{R}^n)$ as
\begin{equation}\label{eq_def_Weyl}
	\langle a^w(x,D_x)u, v\rangle = \frac{1}{(2\pi)^n} \int e^{i(x-y)\cdot \xi} a\left(\frac{x+y}{2}, \xi\right) u(y)\overline{v(x)}\,dy\,d\xi\,dx.
\end{equation}
If $a$ is bounded with all derivatives, the operator $a^w(x,D_x)$ then has a continuous extension to a bounded operator on $L^2(\Bbb{R}^n)$ by the Calderon-Vaillancourt Theorem. If $a(x,\xi)$ is a polynomial in $(x, \xi)$, the standard computations with the Fourier transform give $a^w(x,D_x)$ as a polynomial in the non-commuting operators of multiplication by $x_j$ and $D_{x_j} = -i(\partial/\partial x_j)$, for $j=1,\dots,n$.

\begin{example}
If $q(x,\xi) = ax^2 + 2bx\xi + c\xi^2$ for $(x,\xi) \in \Bbb{R}^2$, then
\[
	q^w(x,D_x) = ax^2 + \frac{b}{i}\left(x\frac{d}{dx} + \frac{d}{dx}x\right) - c\frac{d^2}{dx^2}.
\]
In particular, the Weyl symbol of $Q_\theta$ in \eqref{eq_def_NSAHO} is
\begin{equation}\label{eq_NSAHO_symbol}
	q_\theta(x,\xi) = e^{-i\theta}\xi^2 + e^{i\theta}x^2.
\end{equation}
\end{example}

The standard symplectic form on $T^*\Bbb{R}^{n} \approx \Bbb{R}^n_x \times \Bbb{R}^n_\xi$ is
\[
	\sigma((x,\xi),(y,\eta)) = \xi\cdot y - \eta\cdot x.
\]
A (real) linear transformation $K$ is said to be canonical if, for all $(x,\xi), (y,\eta) \in \Bbb{R}^{2n}$,
\[
	\sigma(K(x,\xi), K(y,\eta)) = \sigma((x,\xi),(y,\eta)).
\]
In dimension one, this is equivalent to assuming that $\det K = 1$.

\begin{definition}\label{def_metaplectic} A continuous automorphism $\mathcal{U}$ of the Schwartz space $\mathscr{S}'(\Bbb{R}^n)$ is a member of the metaplectic group \cite[Definition 1.4]{Leray_1981} if the conjugation $L \mapsto \mathcal{U}L\mathcal{U}^{-1}$ preserves the set
\[
	\{\ell^w(x,D_x) \::\: \ell : \Bbb{R}^{2n}\to \Bbb{R}\textnormal{ is linear}\}
\]
and if the restriction of $\mathcal{U}$ to $L^2(\Bbb{R}^n)$ is unitary.

More concretely, $\mathcal{U}$ belongs to the metaplectic group if and only if it may be written as a composition of linear changes of variables, multiplication by imaginary Gaussians, and the Fourier transform in one variable.

In this case, there is a real linear canonical transformation $K$ associated with $\mathcal{U}$ for which
\begin{equation}\label{eq_metaplectic_Egorov}
	\mathcal{U}a^w(x,D_x)\mathcal{U}^* = (a\circ K)^w(x,D_x), \quad \forall a \in \mathscr{S}'(\Bbb{R}^{2n}).
\end{equation}
This may be reversed \cite[Theorem 18.5.9]{Hormander_ALPDO_3}: for any real linear canonical transformation $K$ on $\Bbb{R}^{2n}$, there exists an element of the metaplectic group for which \eqref{eq_metaplectic_Egorov} holds, and this element is determined by $K$ uniquely up to a factor of modulus one.
\end{definition}

We say that the symbols $a(x,\xi)$ and $b(x,\xi)$ are symplectically equivalent if there exists some real linear canonical transformation $K$ such that $a\circ K = b$. This implies that their Weyl quantizations are unitarily equivalent as operators on $L^2(\Bbb{R}^n)$.

Any positive semidefinite quadratic form on $\Bbb{R}^2$ is symplectically equivalent to either $x^2$ or $\rho(x^2 + \xi^2)$ for some $\rho \geq 0$; see \cite[Theorem 21.5.3]{Hormander_ALPDO_3}.

Any complex-valued quadratic form on $\Bbb{R}^2$ for which $q^{-1}(\{0\}) = \{(0,0)\}$ and $q(\Bbb{R}^2) \neq \Bbb{C}$, a condition which is satisfied if $q$ has positive definite real part, is symplectically equivalent to $\rho q_\theta(x,\xi)$ for some $\theta \in [0, \pi/2)$ and with $\rho \in \Bbb{C}\backslash\{0\}$; see \cite[Section 7.8]{Krejcirik_Siegl_Tater_Viola_2014} and references therein, particularly to \cite{Pravda-Starov_2007}.

The symplectic polarization of a quadratic form $q(x,\xi):\Bbb{R}^2 \to \Bbb{C}$ is made through the fundamental matrix
\[
	 F = \frac{1}{2}\left(\begin{array}{cc} q''_{\xi x} & q''_{\xi\xi} \\ -q''_{xx} & -q''_{x \xi}\end{array}\right),
\]
the unique matrix antisymmetric with respect to $\sigma$ for which
\[
	q(x,\xi) = \sigma((x,\xi),F(x,\xi)), \quad \forall (x,\xi) \in \Bbb{R}^{2n}.
\]
Since $\sigma(X, Y) = X\cdot JY$ for all $X,Y \in \Bbb{R}^{2n}$ and $J \in GL_{2n}(\Bbb{C})$ given by
\begin{equation}\label{eq_def_J}
	J = \left(\begin{array}{cc} 0 & -1 \\ 1 & 0\end{array}\right),
\end{equation}
note that
\[
	F = -\frac{1}{2}J\opnm{Hess}(q)
\]
for $\opnm{Hess}(q) = \nabla^2_{x,\xi}q$ the Hessian matrix of second derivatives of $q$.

\begin{example}\label{ex_HO_reduce}
We recall how to use the fundamental matrix to deduce the harmonic oscillator structure of a positive definite quadratic operator in dimension one. Let
\[
	q(x,\xi) = ax^2 + 2bx\xi + c\xi^2
\]
be positive in the sense that $q(x,\xi) > 0$ for any $(x,\xi) \in \Bbb{R}^2\backslash \{0\}$. (This implies that the coefficients are real.) We can easily check that
\[
	\det(F-\lambda) = \lambda^2 - b^2 + ac = \lambda^2 + \det F .
\]
Note that $\det F$ must be positive by positivity of $q$. Writing $\delta = \det F$ gives $\opnm{Spec}F = \pm i\sqrt{\delta}$, and the eigenspaces of $F$ are
\[
	\ker (F - i\sqrt{\delta}) = \{(x,i\gamma x)\}_{x\in\Bbb{C}}, \quad \ker(F+i\sqrt{\delta}) = \{(x,-i\overline{\gamma}x)\}_{x\in\Bbb{C}}
\]
where $\gamma = \frac{1}{c}(\sqrt{\delta} + ib)$.

These linear algebra facts are in correspondence with the expression
\[
	q(x,\xi) = \frac{\sqrt{\delta}}{\Re \gamma}|\xi - i\gamma x|^2,
\]
from which we can check that
\[
	q^w(x,D_x)e^{-\gamma x^2/2} = \sqrt{\delta}e^{-\gamma x^2/2}
\]
and that $q(x,\xi)$ is symplectically equivalent to $\sqrt{\delta}(x^2 + \xi^2)$.
\end{example}

The fundamental matrix allows us to write the Mehler formula \cite[Theorem 4.2]{Hormander_1995}
\begin{equation}\label{eq_Mehler}
	M_q(x,\xi) = \frac{1}{\sqrt{\det\cos F}}\exp(\sigma((x,\xi), (\tan F)(x,\xi))
\end{equation}
for which, at least where $\Re q \leq 0$,
\[
	\exp(q^w(x, D_x)) = M^w_q(x,D_x).
\]

\begin{example}\label{ex_NSAHO_Mehler}
For the symbol $q_\theta(x,\xi)$ in \eqref{eq_NSAHO_symbol},
\[
	F = \left(\begin{array}{cc} 0 & e^{-i\theta} \\ -e^{i\theta} & 0\end{array}\right).
\]
Since $F^2 = -I$ for any $\theta \in \Bbb{R}$, the corresponding Mehler formula is
\[
	M_{-tq_\theta}(x,\xi) = \frac{1}{\cosh t}\exp(-\tanh(t) q_\theta(x,\xi)).
\]

We note that $\Omega_\theta$ in \eqref{eq_def_Omega_theta} is the preimage under hyperbolic tangent of the sector
\[
	\{t \in \Bbb{C}\backslash \{0\} \::\: \Re (tq_\theta(x,\xi)) \geq 0 \quad \forall (x,\xi) \in \Bbb{R}^2\}
\]
and coincides with the set of $t \in \Bbb{C}\backslash i\Bbb{R}$ such that $M_{-tq_\theta}(x,\xi)$ is bounded on $\Bbb{R}^2$.
\end{example}

We also recall that, for two symbols $a(x,\xi), b(x,\xi)$ which are bounded with all derivatives,
\[
	a^w(x,D_x)b^w(x,D_x) = (a\sharp b)^w(x,D_x)
\]
with
\begin{equation}\label{eq_sharp_product}
	(a\sharp b)(x,\xi) = \left.e^{\frac{i}{2}\sigma((D_x, D_\xi), (D_y, D_\eta))}a(x,\xi)b(y,\eta)\right|_{(y,\eta) = (x,\xi)}
\end{equation}
This is a pseudodifferential operator on $\Bbb{R}^{4n}$, and we may directly compute its action on Gaussian symbols.

We will see in proposition \ref{prop_sharp_higher_dim} that the sharp product of Gaussian symbols gives a Gaussian symbol. Consider a symbol 
\[
	a(x,\xi) = e^{-q(x,\xi)}, \quad (x,\xi) \in \Bbb{R}^2,
\]
where $q$ is quadratic and $\Re q \geq 0$. If $a^w(x,D_x)$ is self-adjoint, then $q$ is real-valued. In this case, $q(x,\xi)$ is therefore symplectically equivalent to $0$, to $x^2$, or to $r(x^2 + \xi^2)$ for some $r > 0$, according to whether $a^w(x,D_x)$ is the identity, a bounded but not compact operator, or a compact operator.

Since $\tanh:[0,\infty) \to [0,1)$, the Mehler formula for the harmonic oscillator only allows us to analyze $e^{-r(x^2 + \xi^2)}$ when $r \in [0,1)$. When $r=1$, the corresponding operator is (up to a constant) orthogonal projection onto the Gaussian $e^{-x^2/2}$, reflecting that it is the limit of the Mehler formula for $e^{-tQ_0}$ as $t \to \infty$ (see \cite[Section 1.4]{Helffer_1984}). Beyond, when $r > 1$, holomorphic continuation gives a non-positive compact operator.

\begin{proposition}\label{prop_HO_Mehler_all_times}
Let $a_r(x,\xi) = e^{-r(x^2 + \xi^2)}$ for some $r \in \Bbb{C}$ with $\Re r > 0$. Then
\begin{equation}\label{eq_HO_all_times}
	a_r^w(x,D_x) = \cosh(\arctanh r)e^{-(\arctanh r)Q_0}
\end{equation}
in the sense that, for $h_k(x)\in \ker(Q_0 - (2k+1))$ the Hermite functions,
\begin{equation}\label{eq_HO_all_times_Hermites}
	a_r^w(x,D_x)h_k(x) = \frac{(1-r)^k}{(1+r)^{k+1}}h_k(x).
\end{equation}
As a consequence, $\|a_r^w(x,D_x)\| = |1+r|^{-1}$.
\end{proposition}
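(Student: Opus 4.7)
The strategy is to reduce the claim to the standard Mehler formula for $Q_0$, compute the action on Hermite functions directly, and then read off the norm from the eigenvalue list. Specializing example \ref{ex_NSAHO_Mehler} to $\theta = 0$ gives
\[
	M_{-tQ_0}(x,\xi) = \frac{1}{\cosh t}\exp\bigl(-\tanh(t)(x^2 + \xi^2)\bigr).
\]
For $t > 0$, setting $r = \tanh t \in (0,1)$ yields $a_r = \cosh(t)\, M_{-tQ_0}$, so \eqref{eq_HO_all_times} holds on $(0,1)$ with the principal branch of $\arctanh$. Applying both sides to $h_k$ and using $Q_0 h_k = (2k+1) h_k$ together with the elementary identities $1 \pm \tanh t = 2 e^{\pm t}/(e^t + e^{-t})$ gives
\[
	a_r^w h_k = \cosh(t)\, e^{-(2k+1)t} h_k = \frac{(1-r)^k}{(1+r)^{k+1}} h_k,
\]
which is \eqref{eq_HO_all_times_Hermites} for $r \in (0,1)$.

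To extend to general $r$ with $\Re r > 0$, I would invoke analytic continuation. Since $r \mapsto a_r$ is a holomorphic $\mathscr{S}(\Bbb{R}^2)$-valued map on $\{\Re r > 0\}$, each matrix entry $\langle a_r^w h_j, h_k\rangle$ depends holomorphically on $r$. These entries vanish off the diagonal and equal $(1-r)^k/(1+r)^{k+1}$ on the diagonal for $r \in (0,1)$, so by the identity theorem the same holds throughout $\{\Re r > 0\}$. This proves \eqref{eq_HO_all_times_Hermites} in full generality and interprets \eqref{eq_HO_all_times} in the stated sense.

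For the norm, because $\{h_k\}_{k\geq 0}$ is an orthonormal basis of $L^2(\Bbb{R})$ diagonalizing $a_r^w$, one has $\|a_r^w\| = \sup_k |(1-r)^k/(1+r)^{k+1}|$. The computation $|1-r|^2 - |1+r|^2 = -4\Re r < 0$ gives $|1-r|/|1+r| < 1$, so the supremum is attained at $k = 0$ and equals $|1+r|^{-1}$. The only subtle point, rather than a genuine obstacle, is justifying the analytic continuation---specifically, verifying that off-diagonal matrix entries remain zero for complex $r$---but this follows routinely from the holomorphic dependence of $a_r$ as a Schwartz function and the continuity of the Weyl quantization on $\mathscr{S}(\Bbb{R}^2)$.
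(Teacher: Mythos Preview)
Your proof is correct and follows essentially the same approach as the paper: establish \eqref{eq_HO_all_times_Hermites} for $r \in (0,1)$ via the Mehler formula, extend to $\{\Re r > 0\}$ by analytic continuation of the (holomorphic) matrix entries, and read off the norm from the eigenvalues using $|1-r| < |1+r|$. Your version is slightly more explicit about the off-diagonal entries and the inequality $|1-r|^2 - |1+r|^2 = -4\Re r < 0$, but the argument is the same.
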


\begin{proof}
The formula for $a_r^w(x,D_x)h_k(x)$ is clearly holomorphic in $r$ for $\Re r > 0$, and by the Mehler formula for the harmonic oscillator, for $r \in (0,1)$,
\[
	a_r^w(x,D_x)h_k(x) = \cosh(\arctanh r)e^{-(2k+1)\arctanh r}h_k(x) = \frac{(1-r)^k}{(1+r)^{k+1}}h_k(x)
\]
by usual formulas like $e^{-\arctanh r} = \sqrt{\frac{1-r}{1+r}}$ when $r \in (0,1)$. The formula \eqref{eq_HO_all_times_Hermites} is holomorphic on $\{\Re r > 0\}$ and therefore coincides with $a^w_r(x,D_x)h_k(x)$ on all of $\{\Re r > 0\}$.

The norm of $a_r^w(x,D_x)$ is then attained on the function $h_0(x)$, since the $h_k$ are orthogonal and $|1-r| < |1+r|$ on $\{\Re r > 0\}$.
\end{proof}

\begin{corollary}\label{cor_positive_compact}
Let $a(x,\xi) = e^{-q(x,\xi)}$ with $q:\Bbb{R}^2 \to \Bbb{C}$ quadratic. If $a^w(x,D_x)$ is self-adjoint, positive, compact, and is not rank 1, then $q(x,\xi)$ is symplectically equivalent to $r(x^2 + \xi^2)$ for some $r \in (0,1)$. If $a^w(x,D_x)$ is self-adjoint and bounded but not compact, then either $q = 0$ or $q(x,\xi)$ is symplectically equivalent to $x^2$.
\end{corollary}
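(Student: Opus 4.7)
Self-adjointness of $a^w(x,D_x)$ forces $q$ to be real-valued, since $(a^w)^* = (\bar a)^w$ gives $a = \bar a$ and hence $q = \bar q$. The plan is then to put $q$ into a symplectic normal form: by \eqref{eq_metaplectic_Egorov}, symplectic equivalence of quadratic forms corresponds to unitary equivalence of Weyl quantizations, so boundedness, compactness, positivity, and rank are all preserved. Following Example \ref{ex_HO_reduce}, a real quadratic form on $\Bbb{R}^2$ is classified under symplectic equivalence by the sign of $\det F$, where $F$ is its fundamental matrix: when $\det F > 0$ the form is definite and symplectically equivalent to $\pm r(x^2+\xi^2)$ for some $r > 0$; when $\det F < 0$ the form has signature $(1,1)$ and is equivalent to $\pm 2rx\xi$ for some $r > 0$; and when $\det F = 0$ the form has rank at most one, and so is equivalent to $0$ or to $\pm x^2$.

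The next step is to rule out the normal forms that are not positive semidefinite, since in each of these the symbol $e^{-q}$ fails to be bounded on $\Bbb{R}^2$ and one can check directly that the Weyl quantization is unbounded on $L^2(\Bbb{R})$. For $q \sim -x^2$ the operator is multiplication by $e^{x^2}$; for $q \sim \pm 2rx\xi$ the Weyl quantization of $e^{-q}$ is, up to a metaplectic change of variables, a weighted dilation $u(x)\mapsto c\, u(\lambda x)$ with $|\lambda|\neq 1$; and for $q \sim -r(x^2+\xi^2)$ one may analytically continue the formula of Proposition \ref{prop_HO_Mehler_all_times} or test on a sequence of coherent states along which $-q \to +\infty$. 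What remains is precisely the three positive semidefinite normal forms: $q \sim 0$, $q \sim x^2$, and $q \sim r(x^2+\xi^2)$ with $r > 0$.

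With the reduction in hand, the conclusion is then a matter of reading off spectral properties in each surviving normal form. In the first two cases, $a^w$ is the identity or multiplication by $e^{-x^2}$ respectively; both are self-adjoint, bounded, positive, and not compact, which covers the second assertion of the corollary. In the third case, Proposition \ref{prop_HO_Mehler_all_times} gives $a^w$ compact with Hermite eigenvalues $(1-r)^k/(1+r)^{k+1}$; positivity forces $r\leq 1$, the value $r=1$ is the unique one producing a rank-$1$ operator, and the hypotheses of the first assertion therefore pin down $r \in (0,1)$.

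The main obstacle is the verification, in the second step, that every non–positive-semidefinite normal form genuinely yields an unbounded operator on $L^2(\Bbb{R})$. This step sits slightly outside the Mehler-formula framework used elsewhere in the paper, since \eqref{eq_Mehler} degenerates or changes character when $\det F \leq 0$, but each of the three offending forms can be disposed of by an elementary direct computation as sketched above. Once this obstacle is cleared, the rest of the proof is pure bookkeeping on the eigenvalue formula of Proposition \ref{prop_HO_Mehler_all_times}.
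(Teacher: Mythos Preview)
Your overall strategy matches the paper's: the corollary is stated without a separate proof and is meant to be read off from the paragraph preceding Proposition~\ref{prop_HO_Mehler_all_times} (which already assumes $\Re q \geq 0$) together with the eigenvalue formula \eqref{eq_HO_all_times_Hermites}. The only substantive addition you make is your second step, where you try to \emph{derive} the restriction $q \geq 0$ from the hypotheses rather than assume it.

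That added step contains an error. For the indefinite normal form $q \sim 2rx\xi$ with $r$ real, the Weyl quantization of $e^{-q}$ is \emph{not} a real dilation with $|\lambda|\neq 1$: the dilation formula $u \mapsto c\,u(\lambda x)$ comes from the symbol $e^{-2i\beta x\xi}$ with $\beta$ real (and gives a bounded operator), whereas for real $r\neq 0$ the function $e^{-2rx\xi}$ grows exponentially on $\{x\xi<0\}$ and is not even a tempered distribution. The same is true of $e^{x^2}$ and of $e^{r(x^2+\xi^2)}$. So in all three non--positive-semidefinite cases the issue is not that $a^w$ is an unbounded operator, but that $a\notin\mathscr{S}'(\Bbb{R}^2)$ and hence $a^w$ is not defined via \eqref{eq_def_Weyl} at all. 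The correct (and much shorter) argument is therefore: the hypothesis that $a^w(x,D_x)$ is a bounded operator already forces $a=e^{-q}\in\mathscr{S}'(\Bbb{R}^2)$, which for real quadratic $q$ means $q\geq 0$; this recovers the paper's standing assumption $\Re q\geq 0$, and your ``main obstacle'' disappears. After this correction, your Steps~1 and~3 are fine and coincide with the paper's reasoning.
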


We finish with a straightforward observation regarding our ability to pick out functions of harmonic oscillators via their norms on Gaussians
\begin{equation}\label{eq_u_gamma_def}
	u_\gamma(x) = \left(\frac{\pi}{\Re \gamma}\right)^{-1/4} e^{-\frac{\gamma}{2} x^2}
\end{equation}
for $\gamma \in \Bbb{C}_+,$
\begin{equation}\label{eq_Cplus_def}
	\Bbb{C}_+ = \{\gamma \in \Bbb{C} \::\: \Re \gamma > 0\}.
\end{equation}
This observation is motivated by $f(t) = e^{-t}$, for use in section \ref{ssec_norm_FBI}.

\begin{proposition}\label{prop_gaussians}
Let $Q = q^w(x,D_x)$ for $q:\Bbb{R}^2\to [0,\infty)$ quadratic, real-valued, and positive semidefinite. Let $f:[0, \infty) \to (0, \infty)$ be a strictly decreasing Borel function, so that $f(Q)$ may be defined by the functional calculus. Then
\[
	\|f(Q)\| = \sup_{\gamma \in \Bbb{C}_+} \|f(Q)u_\gamma\|_{L^2(\Bbb{R})}.
\]
Furthermore, we may identify the form of $Q$ by examining where this norm is attained in the following way.
\begin{enumerate}[(i)]
\item\label{it_gaussian_HO} The inequality $\|f(Q)\| < f(0)$ holds if and only if $Q$ is unitarily equivalent to $\lambda Q_0$, where $\lambda = f^{-1}(\|f(Q)\|)$. In this case, there exists a unique $\gamma_0 \in \Bbb{C}_+$ for which $\|f(Q)\| = \|f(Q)u_{\gamma_0}\|$. Consequently, $q(x,\xi) = \frac{\lambda}{\Re \gamma_0}|\xi - i\gamma_0 x|^2$.
\item\label{it_gaussian_zero} If $\|f(Q)\| = f(0)$, then $\|f(Q)u_\gamma\| = f(0)$ for some $\gamma \in \Bbb{C}_+$ if and only if $\|f(Q)u_\gamma\| = f(0)$ for all $\gamma \in \Bbb{C}_+$ if and only if $Q = 0$.
\item\label{it_gaussian_heat} If $\|f(Q)\| = f(0)$ but $\|f(Q)u_\gamma\| \neq f(0)$ for some (all) $\gamma \in \Bbb{C}_+$, then there exists a unique $\gamma_0 \in \partial \Bbb{C}_+ \cup \{+\infty\}$ such that
\[
	f(0) = \mathop{\lim_{\gamma \to \gamma_0}}_{\gamma\in\Bbb{C}_+} \|f(Q)u_\gamma\|,
\]
where the limit is non-tangential (if $\gamma_0 = +\infty$, the limit is over any closed sector contained in $\Bbb{C}_+$). In this case, there is some $\lambda > 0$ such that $q(x,\xi) = \lambda x^2$ if $\gamma_0 = +\infty$ or $q(x,\xi) = \lambda(\xi - i\gamma_0 x)^2$ if $\gamma_0 \in \partial \Bbb{C}_+ = i\Bbb{R}$.
\end{enumerate}
\end{proposition}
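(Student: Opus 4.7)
The plan is to reduce $q$ to symplectic normal form and analyze each case directly. By the classification recalled just before example \ref{ex_HO_reduce}, the positive semidefinite form $q$ is symplectically equivalent to exactly one of $0$, $\lambda x^2$, or $\lambda(x^2+\xi^2)$ for some $\lambda > 0$. I would choose a metaplectic unitary $\mathcal{U}$ realizing this equivalence via \eqref{eq_metaplectic_Egorov}, so that $\tilde Q = \mathcal{U} Q \mathcal{U}^*$ is in normal form and $f(\tilde Q) = \mathcal{U} f(Q)\mathcal{U}^*$. Using the three classes of generators in definition \ref{def_metaplectic}, a direct computation shows that $\mathcal{U}$ sends each $u_\gamma$ to a unimodular scalar multiple of $u_{\tilde\gamma}$, where $\gamma \mapsto \tilde\gamma$ is composed from the M\"obius moves $\gamma \mapsto c\gamma$ (for $c>0$), $\gamma \mapsto \gamma - ib$ (for $b\in\Bbb{R}$), and $\gamma \mapsto 1/\gamma$; each of these extends continuously to an automorphism of $\overline{\Bbb{C}_+}\cup\{+\infty\}$ preserving the boundary $i\Bbb{R} \cup \{+\infty\}$.

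In the case $q \sim \lambda(x^2+\xi^2)$, example \ref{ex_HO_reduce} produces $\gamma_0 \in \Bbb{C}_+$ (explicit from the coefficients of $q$) with $Qu_{\gamma_0} = \lambda u_{\gamma_0}$, $\lambda$ being the bottom of the simple spectrum $\{(2k+1)\lambda\}_k$ of $Q$. Since $f$ is strictly decreasing, $\|f(Q)\| = f(\lambda) < f(0)$ is attained exactly on the one-dimensional ground-state eigenspace; hence $\|f(Q)u_{\gamma_0}\| = \|f(Q)\|$, and since distinct $u_\gamma$ are non-proportional, $\gamma_0$ is the unique maximizer among Gaussians, which establishes both the supremum formula and claim (\ref{it_gaussian_HO}). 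The expression $q(x,\xi) = (\lambda/\Re\gamma_0)|\xi - i\gamma_0 x|^2$ is precisely the content of example \ref{ex_HO_reduce}. The case $q = 0$, i.e.\ $f(Q) = f(0)I$, gives (\ref{it_gaussian_zero}) immediately.

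In the case $q \sim \lambda x^2$, conjugation reduces the problem to $\tilde Q = \lambda X^2$ acting by multiplication on $L^2(\Bbb{R})$, whose spectrum is $[0,\infty)$ with $0$ not an eigenvalue. Thus $\|f(Q)\| = f(0)$ is not attained on any unit vector, and writing $\mathcal{U} u_\gamma$ as a unit scalar times $u_{\tilde\gamma}$ one finds
\[
	\|f(Q) u_\gamma\|^2 = \sqrt{\tfrac{\Re\tilde\gamma}{\pi}}\int_{\Bbb{R}} f(\lambda x^2)^2 e^{-\Re\tilde\gamma\, x^2}\,dx,
\]
which is strictly less than $f(0)^2$ for every $\tilde\gamma \in \Bbb{C}_+$ and tends to $f(0)^2$ as $\Re\tilde\gamma \to +\infty$ within any closed sector of $\Bbb{C}_+$ (rescale $x \mapsto x/\sqrt{\Re\tilde\gamma}$ and apply dominated convergence). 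The extended M\"obius map $\gamma \mapsto \tilde\gamma$ has a unique preimage $\gamma_0 \in \partial\Bbb{C}_+ \cup \{+\infty\}$ of $+\infty$, which produces both the supremum formula and the non-tangential limit claimed in (\ref{it_gaussian_heat}). To identify $\gamma_0$ with $q$, I would trace what $\mathcal{U}^{-1}$ does to the Lagrangian $\{(x,0)\} = \ker(\lambda x^2)$: writing $q = \lambda \ell^2$ with $\ell = ax + b\xi$ real and nonzero, the case $b = 0$ gives $q = \lambda a^2 x^2$ and $\gamma_0 = +\infty$, while $b \ne 0$ gives $q = \lambda b^2(\xi - i(ia/b)x)^2$ with $\gamma_0 = ia/b \in i\Bbb{R}$.

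The main obstacle is this last identification: verifying that the extended M\"obius action on Gaussian parameters aligns the preimage of $+\infty$ with the real-Lagrangian direction of $\ker q$. This amounts to routine linear algebra with the three metaplectic generators, but it is the step where the geometric content --- degenerate Gaussians concentrating along the kernel direction of $q$ --- must be pinned down precisely.
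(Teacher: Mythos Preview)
Your proposal is correct and follows essentially the same strategy as the paper: reduce $q$ to normal form via a metaplectic $\mathcal{U}$, verify the claims for each of $0$, $\lambda x^2$, $\lambda(x^2+\xi^2)$, and transfer back using the induced M\"obius action on the Gaussian parameter $\gamma$. The only notable difference is bookkeeping: the paper writes the M\"obius map in one stroke from the matrix of $K^{-1}$ (obtaining $L(\gamma)=\frac{ic+\gamma a}{d-i\gamma b}$ and checking it is an automorphism of $\Bbb{C}_+$), while you assemble it from the three metaplectic generators; and in case~(\ref{it_gaussian_HO}) you work directly with $Q$ via example~\ref{ex_HO_reduce} rather than first passing to $\lambda Q_0$. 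Your explicit integral for $\|f(\lambda x^2)u_{\tilde\gamma}\|^2$ and the dominated-convergence argument make the paper's ``easy to see'' step in case~(\ref{it_gaussian_heat}) precise. The final alignment you flag as ``the main obstacle'' is exactly what the paper handles by reading off the preimage of $+\infty$ from its closed-form $L$; your generator-based route requires composing the elementary M\"obius moves, which is routine but indeed the one place where a careless sign could slip in. One small omission: for uniqueness of $\gamma_0$ in~(\ref{it_gaussian_heat}) you should note that at any finite boundary point $i\beta_0$ the same rescaling gives the limit $\lim_{t\to\infty} f(t) < f(0)$, so $+\infty$ is the only boundary point (in the $\tilde\gamma$ variable) where the limit equals $f(0)$.
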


\begin{proof}
We know that $q$ is symplectically equivalent to either $0$, $x^2$, or $\lambda(x^2 + \xi^2)$ for some $\lambda > 0$; by conjugating with some element of the metaplectic group (definition \ref{def_metaplectic}), we begin by assuming that $q$ is in one of these forms. In this case, (\ref{it_gaussian_HO}) is obvious since $\opnm{Spec}\lambda Q_0 = \lambda(1+2\Bbb{N})$ and $\lambda Q_0 u_1 = \lambda u_1$. It is also easy to see (\ref{it_gaussian_heat}) because, though the norm is not attained for any $L^2$ function, for any $\eta > 0$ fixed,
\[
	\sup_{\|u\| = 1} \|f(x^2) u\| = f(0) = \mathop{\lim_{\gamma \to \infty}}_{|\arg \gamma| < \pi/2-\eta} \|f(x^2)u_\gamma\|.
\]
(We take non-tangential limits at $+\infty$ in order to exclude degenerate situations like $\gamma = 1+it$, which tends to $\infty$ as $0 < t \to \infty$.) The only case remaining is $Q = 0$, where the characterization is obvious.

The result follows by undoing the reduction of $q$ to a normal form. Suppose that
\[
	\mathcal{U}q^w(x,D_x)\mathcal{U}^* = p^w(x,D_x)
\]
with $p(x,\xi)$ either $0$, $x^2$, or $\lambda(x^2 + \xi^2)$ for some $\lambda > 0$, with $\mathcal{U}$ an element of the metaplectic group for which, for all $g \in \mathscr{S}'(\Bbb{R}^{2n})$,
\begin{equation}\label{eq_id_gauss_metapl}
	\mathcal{U}^*g^w(x,D_x)\mathcal{U} = (g\circ K^{-1})^w(x,D_x), \quad K^{-1} = \left(\begin{array}{cc} a & b \\ c & d\end{array}\right),
\end{equation}
as in \eqref{eq_metaplectic_Egorov}. By the previous discussion and the fact that 
\[
	\|f(Q)\| = \|f(\mathcal{U}^*p^w(x,D_x)\mathcal{U})\| = \|\mathcal{U}^* f(p^w(x,D_x))\mathcal{U}\| = \|f(p^w(x,D_x))\|,
\]
we can deduce the form of $p$ from $\|f(Q)\|$ as in the proposition.

As for the Gaussian for which the norm is attained, we will see that $\mathcal{U}$ induces a linear fractional (M\"obius) transformation, bijective on $\Bbb{C}_+$, on the family of Gaussians parameterized by $\gamma \in \Bbb{C}_+$. If $\gamma \in \Bbb{C}_+$, then $u_\gamma(x)$ is identified up to a coefficient of modulus one as the $L^2$-normalized element of $\opnm{ker}(iD_x + \gamma x)$. We compute that $\mathcal{U}^*u_\gamma(x)$ is a $L^2$-normalized element of the kernel of
\[
	\mathcal{U}^*(iD_x + \gamma x)\mathcal{U} = (d-i\gamma b)\left(iD_x + \frac{ic + \gamma a}{d-i\gamma b}x\right),
\]
by \eqref{eq_id_gauss_metapl}.

Therefore $K$ induces the linear fractional transformation
\begin{equation}\label{eq_lft_gaussians}
	L(\gamma) = \frac{ic+\gamma a}{d-i\gamma b}
\end{equation}
on the parameter $\gamma$ of the Gaussian $u_\gamma$. Since $\{\Re \gamma = 0\}$ is invariant under $L$ and because
\[
	\Re L(1) = \frac{\det K^{-1}}{b^2 + d^2} = \frac{1}{b^2 + d^2} > 0,
\]
we see that $L$ is an automorphism of $\Bbb{C}_+$. (One could also deduce these facts from integrability of $u_{L(\gamma)}$ and our ability to invert $K$ and $L$.)

This linear fractional transformation induced by $K$ allows us to find $\gamma_0$ in (\ref{it_gaussian_HO}) and (\ref{it_gaussian_heat}); the form of $q(x,\xi)$ follows from composition with $K$ and, in the harmonic oscillator case, the computations in example \ref{ex_HO_reduce}.

\end{proof}

\section{The non-self-adjoint harmonic oscillator}\label{sec_NSAHO}

We now compute the norm of the non-self-adjoint harmonic oscillator via the sharp product of its Mehler formula with its complex conjugate. In the boundary case $t \in \partial\Omega_\theta \backslash i\Bbb{R}$, the sharp product is not of symbols bounded with all derivatives (since for instance $|\partial_x^k e^{ix^2}|$ grows like $|x|^k$), but following \cite[Theorem 2.9]{Aleman_Viola_2014b} (see proposition \ref{prop_FBI_boundedness} below), the family is strongly continuous in $t$ and it suffices to check the formula on the eigenfunctions of $Q_\theta$ which decay superexponentially along with their Fourier transforms. A similar argument justifies the use of the Mehler formula beyond the sector of $t \in \Bbb{C}$ for which $\Re t q_\theta(x,\xi) \geq 0$.

In the process of proving this result, we actually prove the following unitary reduction, from which theorem \ref{thm_NSAHO_norm} follows immediately.

\begin{theorem}\label{thm_NSAHO_SVD}
With the assumptions and notations of theorem \ref{thm_NSAHO_norm}, again with $t \in \Omega_\theta$, let
\[
	\delta = \frac{A}{1+A} \in [0,1).
\]
Then $\delta = 0$ if and only if $|\phi| + |\theta| = \pi/2$, in which case there exists $\mathcal{U}$, an element of the metaplectic group (definition \ref{def_metaplectic}), for which
\[
	\mathcal{U}\left((e^{-tQ_\theta})^*e^{-tQ_\theta}\right)^{1/2}\mathcal{U}^* = e^{-x^2}.
\]
Otherwise, when $\delta > 0$, there is some $\mathcal{U}$, an element of the metaplectic group, for which
\[
	\mathcal{U}\left((e^{-tQ_\theta})^*e^{-tQ_\theta}\right)^{1/2}\mathcal{U}^* = e^{-\frac{1}{2}\arctanh(\sqrt{\delta})Q_0}.
\]
\end{theorem}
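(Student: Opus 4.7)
The plan is to compute the Weyl symbol of $(e^{-tQ_\theta})^*e^{-tQ_\theta}$ as a sharp product of two Mehler-formula Gaussians, identify the symplectic normal form of the resulting real-quadratic exponent, and pin down the multiplicative constant via the known operator norm. By example \ref{ex_NSAHO_Mehler}, $e^{-tQ_\theta}$ has Weyl symbol $M_{-tq_\theta}(x,\xi) = (\cosh t)^{-1}\exp(-\tanh(t)q_\theta(x,\xi))$; since $\overline{q_\theta}=q_{-\theta}$, the adjoint has Weyl symbol $(\cosh\bar t)^{-1}\exp(-\overline{\tanh t}\,q_{-\theta})$. The sharp product of two Gaussian symbols being again Gaussian (proposition \ref{prop_sharp_higher_dim}, referenced later in the paper), we may write
\[
	\overline{M_{-tq_\theta}} \sharp M_{-tq_\theta}(x,\xi) = C(t,\theta)\exp(-Q(x,\xi))
\]
for some scalar $C$ and some quadratic form $Q$. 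Self-adjointness forces $Q$ to be real-valued, and positivity forces $Q$ to be positive semidefinite.

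For $t$ in the interior of $\Omega_\theta$ the operator $(e^{-tQ_\theta})^*e^{-tQ_\theta}$ is compact with norm strictly less than $1$, so corollary \ref{cor_positive_compact} places $Q$ in the symplectic normal form $r(x^2+\xi^2)$ with $r\in (0,1)$. By example \ref{ex_HO_reduce}, $r^2$ is the determinant of the fundamental matrix of $Q$, so the crux is a direct computation verifying $\det F_Q = \delta$, i.e.\ $r=\sqrt\delta$. With this in hand, proposition \ref{prop_HO_Mehler_all_times} gives, after conjugation by an appropriate metaplectic $\mathcal{U}$ implementing the normalization,
\[
	\mathcal{U}\left((e^{-tQ_\theta})^*e^{-tQ_\theta}\right)\mathcal{U}^* = C\cosh(\arctanh\sqrt\delta)\, e^{-\arctanh(\sqrt\delta)Q_0}.
\]
The scalar $C\cosh(\arctanh\sqrt\delta)$ must equal $1$: this can be read off directly from the Gaussian sharp product formula, or forced by matching operator norms using theorem \ref{thm_NSAHO_norm} together with the identity $\|(e^{-tQ_\theta})^*e^{-tQ_\theta}\| = \|e^{-tQ_\theta}\|^2$. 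Taking the positive operator square root then yields the claimed $e^{-\frac{1}{2}\arctanh(\sqrt\delta)Q_0}$. The boundary case $\delta = 0$ is parallel: corollary \ref{cor_positive_compact} places $Q$ in the symplectic normal form of a positive multiple of $x^2$, and a metaplectic rescaling of $x$ brings the square root to the multiplication operator $e^{-x^2}$.

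The principal obstacle is the algebraic computation of $\overline{M_{-tq_\theta}} \sharp M_{-tq_\theta}$ and the verification that the determinant of the fundamental matrix of its real-quadratic exponent collapses to exactly
\[
	\delta = \frac{\tfrac{1}{2}|\sinh 2t|^2(\cos 2\theta + \cos 2\phi)}{1+\tfrac{1}{2}|\sinh 2t|^2(\cos 2\theta+\cos 2\phi)}.
\]
This requires juggling $\tanh t$ together with $\overline{\tanh t}$, equivalently $\phi = \arg\tanh t$, alongside the pair $\theta,-\theta$. A secondary subtlety, when $t \in \partial\Omega_\theta\setminus i\Bbb{R}$, is that the Mehler symbols are no longer bounded with all derivatives; this is handled by the strong continuity of $e^{-tQ_\theta}$ in $t$, as indicated immediately before the theorem.
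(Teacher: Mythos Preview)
Your outline matches the paper's approach closely: compute the sharp product $\overline{M_{-tq_\theta}}\sharp M_{-tq_\theta}$ as a Gaussian, identify the real quadratic exponent $p(x,\xi)$, reduce it to symplectic normal form via the determinant of its fundamental matrix, and check that the scalar prefactor is exactly the $\cosh$ factor in the Mehler formula. The paper carries out the Gaussian integration explicitly, obtains a closed expression for $p$, and computes $\det F_p = A/(1+A)$ by hand; it also computes the prefactor directly as $2|f(A_\theta)\sinh 2t|^{-1} = (1+A)^{-1/2} = (\cosh\arctanh\sqrt\delta)^{-1}$, so the symbol is already the Mehler symbol of $e^{-\arctanh(\sqrt\delta)Q_0}$.

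One genuine gap in your write-up: you propose to fix the constant $C$ by matching operator norms against theorem \ref{thm_NSAHO_norm}, but in the paper theorem \ref{thm_NSAHO_norm} is \emph{deduced from} the present theorem (they are proved together in one argument), so that route is circular. You must take the direct-computation alternative you mention. A related point: you invoke ``norm strictly less than $1$'' in the interior to feed into corollary \ref{cor_positive_compact}, but that inequality is itself the content of theorem \ref{thm_NSAHO_norm}. What corollary \ref{cor_positive_compact} actually needs is that the operator is positive, compact, and not rank one; the last condition follows from injectivity of $e^{-tQ_\theta}$ on the eigenfunction basis, without appeal to the norm. The paper sidesteps both issues by computing $\det F$ first and reading off $\det F\in(0,1)$ directly.
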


\begin{proof}
Write $T = \tanh t$. The symbol of $e^{-tQ_\theta}$ is $M_{-tq_\theta}(x,\xi)$ from example \ref{ex_NSAHO_Mehler}, and the symbol of $(e^{-tQ_\theta})^*$ is easily seen to be $\overline{M_{-tq_\theta}} = M_{-\overline{t}q_{-\theta}}$. We compute the symbol of $(e^{-tQ_\theta})^*e^{-tQ_\theta}$ via the sharp product \eqref{eq_sharp_product} and the Fourier transform of a Gaussian; note that $t \in \Omega_\theta$ implies that $Te^{i\theta} \neq 0$ and that $\Re (Te^{\pm i\theta}), \Re (\overline{T}e^{\pm i\theta}) \geq 0$. Writing $Z = (x,\xi,y,\eta)$ and similarly for $Z^*$ and $Z_*$,
\begin{multline*}
	e^{\frac{i}{2}\sigma((D_x, D_\xi), (D_y, D_\eta))}e^{-\bar{t}q_{-\theta}(x,\xi)}e^{-tq_\theta(y,\eta)}
\end{multline*}
\[
	= (2\pi)^{-4} \int e^{iZ^*(Z-Z_*)+\frac{i}{2}(\xi^*y^*-\eta^*x^*) - \overline{T}q_{-\theta}(x_*,\xi_*) - Tq_\theta(y_*, \eta_*)}\,dZ_*dZ^*
\]
\[
	= (2\pi)^{-4} \sqrt{\frac{\pi^4}{|T|^4}}\int e^{iZ^*Z + \frac{i}{2}(\xi^*y^*-\eta^*x^*) - \frac{1}{4\overline{T}}q_{\theta}(x^*,\xi^*) - \frac{1}{4T}q_{-\theta}(y^*,\eta^*)}\,dZ^*
\]
\[
	= (2\pi)^{-4} \sqrt{\frac{16\pi^6}{\overline{T}^2}}\int e^{i(xx^* + \xi\xi^*) - \frac{1}{4\overline{T}}q_\theta(x^*, \xi^*) - Te^{i\theta}(y+\frac{\xi^*}{2})^2 - Te^{-i\theta}(\eta - \frac{x^*}{2})^2}\,dx^*\,d\xi^*
\]
\[
	= \frac{1}{\left|1 + e^{2i\theta}|T|^2\right|} \exp\left(-Tq_\theta(y,\eta) + \frac{(ix+Te^{-i\theta}\eta)^2}{\frac{1}{\overline{T}e^{-i\theta}}+e^{-i\theta}T} + \frac{(i\xi - Te^{i\theta}y)^2}{\frac{1}{\overline{T}e^{i\theta}} + e^{i\theta}T}\right).
\]

Writing the exponent in a manageable fashion does not seem obvious. We take inspiration from the importance of 
\[
	f(z) = z+z^{-1}
\]
in simplifying Mehler formulas for the harmonic oscillator: in particular, if $\theta = 0$ and $t > 0$, putting $(y,\theta) = (x,\xi)$ gives an exponent of $-2/f(T) = -\tanh(2t)$. We also define
\[
	\phi = \arg T, \quad A_\theta = |T|e^{i\theta},
\]
so we can write
\[
	\left(\frac{1}{\overline{T}e^{-i\theta}} + e^{-i\theta} T\right)^{-1} = \frac{e^{-i\phi}}{f(A_{-\theta})}.
\]
In addition, $\overline{A_\theta} = A_{-\theta}$ and $f(z)$ is holomorphic. This is useful in isolating complex conjugates because we know in advance that the exponent will be real-valued, and we also know from the form of $Q_\theta$ that exchanging $x$ and $\xi$ should exchange $\theta$ and $-\theta$.

We set $(y,\eta) = (x,\xi)$ in the above computation and simplify the exponent and the factor according to the notations just introduced. With $M_{-tq_\theta}$ as in example \ref{ex_NSAHO_Mehler} and with the sharp product defined by \eqref{eq_sharp_product}, the Weyl symbol of $(e^{-tQ_\theta})^* e^{-tQ_\theta}$ is therefore
\[
	(M_{-\bar{t}q_{-\theta}}\sharp M_{-tq_\theta})(x,\xi) = \frac{2}{|f(A_\theta)\sinh 2t|} e^{p(x,\xi)}
\]
with
\[
	p(x,\xi) = -e^{i\phi}(A_\theta x^2 + A_{-\theta}\xi^2) + \frac{e^{-i\phi}}{f(A_{-\theta})}(ix + e^{i\phi}A_\theta \xi)^2 + \frac{e^{-i\phi}}{f(A_\theta)}(i\xi - e^{i\phi}A_{-\theta}\xi)^2.
\]
The coefficient of $x^2$ is
\[
	\frac{1}{2}\partial_x^2 p(x,\xi) = -e^{i\phi}A_\theta - \frac{e^{-i\phi}}{f(A_{-\theta})} + \frac{e^{-i\phi}}{f(A_{\theta})}(e^{i\phi}A_\theta)^2 = -2\Re\left(\frac{e^{i\phi}}{f(A_\theta)}\right),
\]
using that $e^{i\phi}A_\theta f(A_\theta) = e^{i\phi}(1+A_\theta^2)$ to combine the first and third terms. Continuing with similar computations, we arrive at
\[
	p(x,\xi) = -2\Re\left(\frac{e^{i\phi}}{f(A_\theta)}\right)x^2 + 4\Im\left(\frac{A_\theta}{f(A_\theta)}\right)x\xi - 2\Re\left(\frac{e^{i\phi}}{f(A_{-\theta})}\right)\xi^2.
\]

We identify the harmonic oscillator equivalent to $p^w(x,D_x)$ following example \ref{ex_HO_reduce}. The fundamental matrix of $p$ is
\[
	F = 2 \left(\begin{array}{cc} \Im\left(\frac{A_\theta}{f(A_\theta)}\right) & -\Re\left(\frac{e^{i\phi}}{f(A_{-\theta})}\right) \\ \Re\left(\frac{e^{i\phi}}{f(A_\theta)}\right) & -\Im\left(\frac{A_\theta}{f(A_\theta)}\right)\end{array}\right).
\]
Expanding real and imaginary parts using complex conjugates,
\[
	\begin{aligned}
	\det F &= 4 \left(-\left(\Im\left(\frac{A_\theta}{f(A_\theta)}\right)\right)^2 + \Re\left(\frac{e^{i\phi}}{f(A_{-\theta})}\right)\Re\left(\frac{e^{i\phi}}{f(A_\theta)}\right)\right)
	\\ & =\left(\frac{A_\theta}{f(A_{\theta})} - \frac{A_{-\theta}}{f(A_{-\theta})}\right)^2 + \left(\frac{e^{i\phi}}{f(A_{-\theta})} + \frac{e^{-i\phi}}{f(A_\theta)}\right) \left(\frac{e^{i\phi}}{f(A_{\theta})} + \frac{e^{-i\phi}}{f(A_{-\theta})}\right)
	\\ &= \frac{1+A_\theta^2}{f(A_\theta)^2} + \frac{1+A_{-\theta}^2}{f(A_{-\theta})^2} + \frac{2\cos 2\phi - 2A_0^2}{f(A_{\theta})f(A_{-\theta})}
	\\ &= \frac{1}{f(A_{\theta})f(A_{-\theta})}\left(A_\theta f(A_{-\theta}) + A_{-\theta}f(A_{\theta}) + 2\cos 2\phi - 2A_0^2\right)
	\\ &= \frac{2\cos 2\theta + 2\cos 2\phi}{|f(A_\theta)|^2}.
	\end{aligned}
\]
We compute furthermore that
\[
	\begin{aligned}
	|f(A_\theta)|^2 &= \left| (|T| + |T|^{-1})\cos \theta + i(|T| - |T|^{-1})\sin \theta\right|^2
	\\ &= |T|^2 + |T|^{-2} + 2\cos 2\theta
	\\ &= |T|^2 + |T|^{-2} - 2\cos 2\phi + 2\cos 2\theta + 2\cos 2\phi
	\\ &= \left||T|e^{i\phi} - \frac{1}{|T|e^{i\phi}}\right|^2 + 2\cos 2\theta + 2\cos 2\phi.
	\end{aligned}
\]
Noting that $T = |T|e^{i\phi}$ and that $T - T^{-1} = -\frac{2}{\sinh 2t}$ and writing 
\[
	A = \frac{1}{2}|\sinh 2t|^2 (\cos 2\theta + \cos 2\phi) = \left|\frac{1}{2}f(A_\theta)\sinh 2t\right|^2-1
\]
as in \eqref{eq_def_Acste}, we obtain
\[
	\det F = \frac{A}{1+A}.
\]

Note that $t \in \Omega_\theta$ if and only if both $\Re t > 0$ and $|\phi| \leq \pi/2-\theta.$ We see that $\cos 2\theta + \cos 2\phi = 0$ if and only if $|\phi| = \pi/2-\theta$. In this case, $p(x,\xi)$ is symplectically equivalent to $-x^2$ and the coefficient $2|f(A_\theta)\sinh(2t)|^{-1}$ is 1, so the conclusions of theorems \ref{thm_NSAHO_norm} and \ref{thm_NSAHO_SVD} for $t \in \partial \Omega_\theta \cap \{\Re t > 0\}$ follow.

Otherwise, $t \in \opnm{int}\Omega_\theta$, so $A > 0$ and $\det F \in (0,1)$. (This also follows by corollary \ref{cor_positive_compact} and the fact that $e^{-tQ_\theta}$ is compact on $\opnm{int}\Omega_\theta$, shown in propositions \ref{prop_FBI_boundedness} and \ref{prop_bddness_same}.) The symbol of $(e^{-tQ_\theta})^*e^{-tQ_\theta}$ is therefore symplectically equivalent to
\begin{equation}\label{eq_NSHAO_symbol_detF}
	(1+A)^{-1/2}e^{-\sqrt{\det F}(x^2 + \xi^2)}.
\end{equation}
From proposition \ref{prop_HO_Mehler_all_times}, we obtain the norm \eqref{eq_NSAHO_norm} of $\|e^{-tQ_\theta}\|$ in \eqref{eq_NSAHO_norm} as
\[
	\|(e^{-tQ_\theta})^*e^{-tQ_\theta}\|^{1/2} = \left(\sqrt{1+A}(1+\sqrt{\det F})\right)^{-1/2} = \left(\sqrt{1+A} + \sqrt{A}\right)^{-1/2}.
\]
The unitary reduction of $\left((e^{-tQ_\theta})^*e^{-tQ_\theta}\right)^{1/2}$ follows similarly. Since 
\[
	\cosh \arctanh \sqrt{\det F} = (1-\det F)^{-1/2} = \sqrt{1+A},
\]
we see that \eqref{eq_NSHAO_symbol_detF} is already the Mehler formula for $e^{-\arctanh({\sqrt{\det F}})Q_0}$, so taking the square root amounts to dividing the exponent by 2. This completes the proof of theorems \ref{thm_NSAHO_norm} and \ref{thm_NSAHO_SVD}.
\end{proof}

\begin{remark}\label{rem_bdry_Omega}
The case $t \in \frac{i\pi}{2}\Bbb{Z}$ should be treated separately since $\phi = \arg \tanh t$ is not well-defined.  We recall that the eigenfunctions of $Q_\theta$ may be realized as
\begin{equation}\label{eq_uk}
	u_k(x) = h_k(e^{i\theta/2}x) \in \ker(Q_\theta - (1+2k)),
\end{equation}
for $h_k$ the Hermite functions. In particular, the $u_k$ inherit the parity of the Hermite functions, $u_k(-x) = (-1)^k u_k(x)$, and we recall that the span of the $u_k$ is dense in $L^2(\Bbb{R})$. In this way, if $t = i \pi j \in i \pi \Bbb{Z}$ then $e^{-tQ_\theta}u_k(x) = e^{i \pi j}e^{2\pi i j k}u_k(x) = (-1)^ju_k(x)$ which extends by continuity to all of $L^2(\Bbb{R})$. To complete the analysis of all $t \in \frac{i\pi}{2}\Bbb{Z}$ it suffices to treat the case where $t = i\pi/2$. In this case, 
\[
	e^{-tQ_\theta}u_k(x) = -i(-1)^ku_k(x) = -iu_k(-x),
\]
which also extends by continuity.
\end{remark}

\begin{remark}
We also mention, without going into detailed computations, that this formula agrees with three known results.  First, for all $t \in \Bbb{C}$ with $\Re t \geq 0$,
\[
	\|e^{-tQ_0}\| = e^{-\Re t},
\]
since $Q_0$ is self-adjoint and $\opnm{Spec}Q_0 = \{1+2k\::\: k \in \Bbb{N}\}$. Second, for $\phi \in \Bbb{R}$ fixed obeying $|\phi| \leq \pi/2 - |\theta|$, identifying the real part of the operator $e^{i\phi}Q_\theta$ gives that
\[
	\mathop{\lim_{t \to 0}}_{t > 0} \frac{1}{t}(\|e^{-te^{i\phi}Q_\theta}\| - 1) = -\sqrt{\cos(\phi + \theta) \cos(\phi - \theta)}.
\]
Finally, from the analysis of the return to equilibrium (for instance, \cite[Theorem 2.2]{Ottobre_Pavliotis_Pravda-Starov_2012} or \cite[Theorem 4.2]{Aleman_Viola_2014b}),
\[
	\lim_{\Re t \to \infty} e^{\Re t} \|e^{-tQ_\theta}\| = \frac{1}{\sqrt{\cos \theta}},
\]
which is the norm of the first spectral projection for $e^{-tQ_\theta}$ (see \cite{Davies_Kuijlaars_2004}).
\end{remark}

\section{Embeddings between Fock spaces}\label{sec_FBI}

\subsection{Setting and equivalence with the non-self-adjoint harmonic oscillator semigroup}

We recall that we are interested in embeddings 
\[
	\iota: H_{\Phi_1} \ni u \mapsto u \in H_{\Phi_2},
\]
between spaces defined in \eqref{eq_def_HPhi} and for quadratic weights $\Phi_1, \Phi_2:\Bbb{C} \to \Bbb{R}$ which are strictly plurisubharmonic as in \eqref{eq_stplsh}.

Before recalling the relationship between this problem and the semigroup associated with the non-self-adjoint harmonic oscillator, we establish that we may reduce $\Phi_1$ and $\Phi_2$ to model weights: in particular, we may assume that $\Phi_1(z) = \frac{1}{2}|z|^2$.

\begin{lemma}\label{lem_weight_reduction}
Let $\Phi_1, \Phi_2:\Bbb{C}\to\Bbb{R}$ be real-valued real-quadratic forms which are strictly plurisubharmonic as in \eqref{eq_stplsh}, and as in theorem \ref{thm_HPhi_embedding}, let
\[
	a = \frac{\partial_{\bar{z}}\partial_z \Phi_2}{\partial_{\bar{z}}\partial_z \Phi_1}, \quad b = \frac{|\partial_z^2(\Phi_2-\Phi_1)|}{\partial_{\bar{z}}\partial_z\Phi_1}.
\]
Define the auxiliary weights
\[
	\Phi_0(z) = \frac{1}{2}|z|^2
\]
and
\[
	\tilde{\Phi}(z) = \frac{1}{2}\left(a|z|^2 - b\Re(z^2)\right).
\]
Then the (possibly unbounded) embeddings 
\[
	\iota:H_{\Phi_1}\to H_{\Phi_2}
\]
and
\[
	\tilde{\iota}:H_{\Phi_0}\to H_{\tilde{\Phi}}
\]
are unitarily equivalent.
\end{lemma}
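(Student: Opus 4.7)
The plan is to construct a single linear map $U$, given by a fixed explicit formula on holomorphic functions, which restricts to an isometric isomorphism $H_{\Phi_1}\to H_{\Phi_0}$ and simultaneously to an isometric isomorphism $H_{\Phi_2}\to H_{\tilde\Phi}$.  Because both $\iota$ and $\tilde\iota$ are inclusion maps at the level of holomorphic functions, the intertwining identity $\tilde\iota\circ U = U\circ \iota$ will then hold automatically, yielding the unitary equivalence claimed.

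First I would write $\Phi_j(z) = A_j|z|^2 + \Re(B_j z^2)$ with $A_j = \partial_{\bar z}\partial_z \Phi_j > 0$ and $B_j \in \Bbb{C}$, so that $a = A_2/A_1$ and $b = |B_2-B_1|/A_1$.  I then try the ansatz
\[
	(Uu)(w) = \mu\, u(\mu w)\, e^{-B_1 \mu^2 w^2}
\]
with a single complex parameter $\mu$ to be determined.  Multiplication by an entire quadratic exponential and composition with a complex-linear dilation preserve holomorphy and are manifestly invertible by a map of the same shape, so $U$ is automatically bijective on holomorphic functions; the only question is which target weight it realizes isometrically.

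A direct change of variables $z = \mu w$ shows that $\|Uu\|_{\Phi_0} = \|u\|_{\Phi_1}$ reduces to two requirements: the holomorphic exponent $-B_1\mu^2 w^2$ must cancel the $\Re(B_1 z^2)$ part of $2\Phi_1$ in the integrand (which it does by construction), and the remaining Gaussian must satisfy $|\mu|^{-2}|z|^2 = 2A_1|z|^2$, i.e., $|\mu|^2 = 1/(2A_1)$.  Applied to $u\in H_{\Phi_2}$, the same change of variables shows that $U$ is isometric onto $H_{\tilde\Phi}$ precisely when
\[
	\Re\bigl((B_2-B_1)\mu^2 w^2\bigr) = -\tfrac{b}{2}\Re(w^2) \qquad \text{for all } w.
\]
This is a single complex condition on $\mu^2$; it pins down $\arg(\mu^2) = \pi-\arg(B_2-B_1)$ when $b > 0$, and leaves the rotation free (and immaterial) when $b = 0$.

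The only subtle point I anticipate is this compatibility check: the modulus of $(B_2-B_1)\mu^2$ must come out exactly to $b/2$ in order for the rotation and the dilation constraints on $\mu$ to be simultaneously solvable, and this is precisely what the definition $b = |B_2-B_1|/A_1$ encodes (we get $|(B_2-B_1)\mu^2| = |B_2-B_1|/(2A_1) = b/2$, as required).  Once such a $\mu$ is chosen, $U$ acts by the same formula on both $H_{\Phi_1}$ and $H_{\Phi_2}$, so the intertwining $\tilde\iota\circ U = U\circ \iota$ holds on the nose, completing the reduction.
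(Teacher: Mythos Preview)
Your argument is correct and is essentially the paper's proof: the paper factors your single map $U$ as a composition $\mathcal{C}_{\mu/\sqrt{2a_1}}\mathcal{W}_{-b_1}$ of a complex dilation $\mathcal{C}_r u(z)=ru(rz)$ and a Gaussian multiplication $\mathcal{W}_s u(z)=u(z)e^{sz^2}$, with $|\mu|=1$ chosen so that $\mu^2(b_1-b_2)=|b_1-b_2|$, which is exactly your modulus and argument constraints on $\mu$ rewritten. The only cosmetic difference is that the paper separates the two unitary building blocks before composing them, while you write the composite directly.
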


\begin{proof}
Because $\Phi_1$ and $\Phi_2$ are real-valued quadratic functions on $\Bbb{C}$, for $j=1,2$,
\[
	\Phi_j(z) = a_j|z|^2 + \Re (b_jz^2)
\]
for
\[
	a_j = \partial_{\bar z}\partial_z \Phi_j, \quad b_j = \partial_z^2 \Phi_j.
\]
It is straightforward to check that, for $r, s \in \Bbb{C}$ with $r \neq 0$, the maps
\begin{equation}\label{eq_unitary_C}
	\mathcal{C}_r : H_{\Phi(z)} \ni u(z) \mapsto ru(rz) \in H_{\Phi(rz)}
\end{equation}
and
\[
	\mathcal{W}_s : H_{\Phi(z)} \ni u(z) \mapsto u(z)e^{sz^2} \in H_{\Phi(z) + \Re(sz^2)}
\]
are unitary operators for any real-valued real-quadratic strictly plurisubharmonic weight $\Phi$.

For $\mu \in \Bbb{C}$ with $|\mu| = 1$ to be determined, set
\[
	\mathcal{U} = \mathcal{C}_{\mu/\sqrt{2a_1}}\mathcal{W}_{-b_1}.
\]
Then $\mathcal{U}$ is unitary from $H_{\Phi_1}$ to $H_{\Phi_0}$ as well as from $H_{\Phi_2}$ to $H_{\hat{\Phi}}$ where
\[
	\hat{\Phi}(z) = \Phi_2\left(\frac{\mu z}{\sqrt{2a_1}}\right) - \Re \left(b_1\frac{\mu^2 z^2}{2a_1}\right) = \frac{a_2}{2a_1}|z|^2 - \Re \left(\frac{(b_1-b_2)\mu^2}{2a_1}z^2\right).
\]
Letting $\mu$ be such that $\mu^2(b_1-b_2) = |b_1-b_2|$ gives $\hat{\Phi} = \tilde{\Phi}$ and
\[
	\tilde{\iota} = \mathcal{U}\iota \mathcal{U}^*.
\]
\end{proof}

We use an FBI--Bargmann transform (here, FBI stands for Fourier{--}Bros{--}Iagolnitzer) to relate the embedding $\iota$ and the semigroup $e^{-tQ_\theta}$ following \cite{Aleman_Viola_2014b}. We recall the essential facts in dimension one and with complex quadratic phase, making reference to \cite[Chapter 13]{Zworski_2012}; one could also refer to \cite[Sections 12.2, 12.3]{Sjostrand_LoR}, \cite{Martinez_2002}, \cite{Folland_1989}, or the original paper \cite{Bargmann_1961}.

Let
\[
	\varphi(z,x) = \frac{1}{2}\alpha z^2 + \beta zx + \frac{1}{2}\gamma x^2
\]
with $\beta \neq 0$ and $\Im \gamma > 0$. Then
\begin{equation}\label{eq_def_FBI}
	\mathcal{T}_\varphi f(z) = c_\varphi\int_{\Bbb{R}} e^{i\varphi(z,x)}f(x)\,dx,
\end{equation}
with
\[
	c_\varphi = \frac{\beta}{2^{1/2}\pi^{3/4}(\Im \gamma)^{1/4}},
\]
is unitary from $L^2(\Bbb{R})$ to $H_\Phi(\Bbb{C})$ when
\[
	\begin{aligned}
	\Phi(z) &= \sup_{x \in \Bbb{R}} (-\Im \varphi(z,x))
	\\ &= \frac{1}{4 \Im \gamma}|\beta z|^2 - \frac{1}{4\Im \gamma}\Re ((\beta z)^2) - \frac{1}{2}\Im(\alpha z^2).
	\end{aligned}
\]
Conjugation with $\mathcal{T}_\varphi$ allows us to compose symbols with the linear canonical transformation, which is now allowed to be complex, 
\[
	K = -\frac{1}{\beta}\left(\begin{array}{cc} \alpha & -1 \\ \beta^2-\alpha\gamma & \gamma\end{array}\right).
\]

The Weyl quantization on $H_\Phi$ is defined for symbols $a:\Lambda_\Phi\to\Bbb{C}$ with
\[
	\Lambda_\Phi = \left\{\left(z, -2i\partial_z\Phi(z)\right)\right\} = K^{-1}(\Bbb{R}^{2n}),
\]
via the formula (see \cite[Eq.~(13.4.5)]{Zworski_2012} or \cite[Eq.~(12.32)]{Sjostrand_LoR})
\[
	a^w_\Phi(z,D_z)u(z) = \frac{1}{2\pi} \iint_{\zeta = \frac{2}{i}(\partial_z\Phi)(\frac{z+w}{2})} e^{i(z-w)\cdot \zeta}a\left(\frac{z+w}{2}, \zeta\right)u(w)\,d\zeta \wedge dw.
\]
For the weak definition the assumption that $u, v \in \mathscr{S}(\Bbb{R})$ is replaced by the assumption that 
\begin{equation}\label{eq_FBI_Schwartz}
	u, v \in \{w \in H_\Phi \::\: z^k w \in H_\Phi,~~\forall k \in \Bbb{N}\} = \mathcal{T}_\varphi(\mathscr{S}(\Bbb{R})),
\end{equation}
described in, for instance, \cite[p.~142]{Sjostrand_LoR}.

The exact Egorov theorem \cite[Theorem 13.9]{Zworski_2012} (cf.\ \cite[Proposition~3.3.1]{Martinez_2002} or \cite[Eq.~(12.37)]{Sjostrand_LoR}) gives that 
\begin{equation}\label{eq_Egorov}
	\mathcal{T}_\varphi a^w(x,D_x)\mathcal{T}_\varphi^* = (a \circ K)_\Phi^w(z,D_z),
\end{equation}
We will only apply \eqref{eq_Egorov} to polynomial (quadratic) symbols where the usual formulas apply.

\begin{example}\label{ex_NSAHO_FBI}
(See also \cite[Example 2.6]{Viola_2013}.) For $\theta \in (-\pi/2, \pi/2)$ fixed, let
\[
	\varphi(z,x) = ie^{i\theta}\left(\frac{1}{2}(x^2 + z^2) - \sqrt{2}zx\right).
\]
This complex scaling of the classical phase \cite[Eq.~(2.1)]{Bargmann_1961} is chosen so that
\begin{equation}\label{eq_K_NSHAO_FBI}
	K = \frac{1}{\sqrt{2}}\left(\begin{array}{cc} 1 & ie^{i\theta} \\ ie^{-i\theta} & 1 \end{array}\right)
\end{equation}
which gives
\[
	(q_\theta\circ K)(x,\xi) = 2ix\xi.
\]
Therefore, for $\mathcal{T}$ as in \eqref{eq_def_FBI} with $\varphi$ above,
\begin{equation}\label{eq_NSAHO_Egorov}
	\mathcal{T} Q_\theta \mathcal{T}^* = 2z\frac{d}{dz} + 1.
\end{equation}

The corresponding weight is
\begin{equation}\label{eq_NSAHO_weight}
	\Phi(z) = \frac{1}{2\cos \theta}|z|^2 + \frac{1}{2\cos\theta} \Re(ie^{i\theta}(\sin\theta) z^2).
\end{equation}
\end{example}

This particular FBI--Bargmann transform allows us to establish a natural maximal definition of $e^{-tQ_\theta}$ for any $t \in \Bbb{C}$, as shown in \cite{Aleman_Viola_2014b}. As a core for this operator, we take 
\begin{equation}\label{eq_def_V_core}
	V = \opnm{Span}\{u_k\}_{k=0}^\infty, \quad u_k \in \opnm{ker}\left(Q_\theta - (1+2k)\right),
\end{equation}
for $\{u_k\}$ the eigenfunctions of $Q_\theta$ as in \eqref{eq_uk}. For $t \in \Bbb{C}$, let $S_t$ be the operator with domain $V$ defined by linear extension from
\begin{equation}\label{eq_semigroup_by_eigenvalues}
	S_t u_k = e^{-tQ_\theta}u_k = e^{-t(1+2k)}u_k, \quad k \in \Bbb{N}.
\end{equation}

\begin{proposition}\label{prop_NSAHO_FBI_side} 
Fix $\theta \in (-\pi/2, \pi/2)$ and let $Q_\theta$ be as in \eqref{eq_def_NSAHO}. Let the FBI--Bargmann transform $\mathcal{T}$ and the weight $\Phi$ be as in example \ref{ex_NSAHO_FBI}. For any $t\in\Bbb{C}$, let $S_t = e^{-tQ_\theta}$ with domain $V$ in \eqref{eq_def_V_core}.

Then $S_t$ is closable with closure
\begin{equation}\label{eq_NSAHO_semigroup_embedding_equiv}
	\overline{S_t} = e^{t}\mathcal{T}^* (\mathcal{C}_{e^{-2t}}) \iota \mathcal{T},
\end{equation}
where the embedding $\iota: H_{\Phi} \to H_{\Phi(e^{2t}z)}$ has the domain
\begin{equation}\label{eq_NSAHO_embedding_domain}
	\opnm{Dom}(\iota) = H_\Phi \cap H_{\Phi(e^{2t}z)}.
\end{equation}
and $\mathcal{C}_{e^{-2t}} : H_{\Phi(e^{2t}z)} \to H_{\Phi(z)}$ is as in \eqref{eq_unitary_C}. As a consequence,
\[
	\opnm{Dom}(\overline{S_t}) = \mathcal{T}^* \opnm{Dom}(\iota) = \{u \in L^2(\Bbb{R}) \::\: \mathcal{T}u \in H_{\Phi(e^{2t}z)}\}.
\]
\end{proposition}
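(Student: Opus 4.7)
The plan is to transport $S_t$ to the FBI--Bargmann side via the unitary $\mathcal{T}$ of example \ref{ex_NSAHO_FBI}, where by \eqref{eq_NSAHO_Egorov} the operator $Q_\theta$ conjugates to the simple first-order operator $L := 2z\partial_z + 1$. Then the semigroup can be read off as a scaling $f(z) \mapsto e^{-t}f(e^{-2t}z)$ and matched to the composition $e^t \mathcal{C}_{e^{-2t}}\iota$, with the domain dictated by integrability of the dilated function against the weight $e^{-2\Phi}$.

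\textbf{Step 1 (eigenfunctions and action on polynomials).} Since $Q_\theta$ has simple spectrum $\{2k+1\}$ with eigenfunctions $u_k$, the Egorov relation \eqref{eq_NSAHO_Egorov} forces $\mathcal{T}u_k = c_k z^k$ for nonzero constants $c_k$, so $\mathcal{T}(V) = \Bbb{C}[z]$. A direct computation on the eigenbasis gives
\[
	e^t \mathcal{C}_{e^{-2t}}\iota(z^k) = e^t \cdot e^{-2t}(e^{-2t}z)^k = e^{-(2k+1)t}z^k,
\]
which, transported back by $\mathcal{T}^*$, agrees with $S_t u_k$ as defined in \eqref{eq_semigroup_by_eigenvalues}. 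Polynomials lie in every strictly plurisubharmonic weighted Fock space, so the composition is well-defined on $\mathcal{T}(V)$.

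\textbf{Step 2 (closedness of the right-hand side).} Let $T := e^t \mathcal{T}^* \mathcal{C}_{e^{-2t}} \iota \mathcal{T}$ with natural domain $\{u \in L^2(\Bbb{R}) : \mathcal{T}u \in H_{\Phi(e^{2t}z)}\}$. Because $\mathcal{T},\mathcal{T}^*$ and $\mathcal{C}_{e^{-2t}}$ are unitary between the appropriate weighted spaces, $T$ is closed if and only if $\iota$ with domain \eqref{eq_NSAHO_embedding_domain} is closed. The latter follows from Gaussian bounds on point evaluations in any $H_\Psi$: if $f_n \to f$ in $H_\Phi$ and $f_n \to g$ in $H_{\Phi(e^{2t}z)}$, both convergences force pointwise convergence of entire functions, hence $f \equiv g$. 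Thus $T$ is a closed extension of $S_t|_V$, which proves closability of $S_t$ and the inclusion $\overline{S_t} \subseteq T$.

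\textbf{Step 3 (core property, the main obstacle).} It remains to show $V$ is a core for $T$, equivalently that $\Bbb{C}[z]$ is dense in $\opnm{Dom}(\iota) = H_\Phi \cap H_{\Phi(e^{2t}z)}$ for the graph norm. For $f$ in this intersection, consider the dilations $\rho_r(z) := f(rz)$ with $r \in (0,1)$. The change of variables $w = rz$ gives
\[
	\|\rho_r\|_\Phi^2 = r^{-2}\|f\|^2_{\Phi/r^2} \xrightarrow[r\to 1^-]{} \|f\|_\Phi^2
\]
by monotone convergence, and pointwise convergence plus dominated convergence tested on the dense subspace of polynomials upgrades this to strong convergence $\rho_r \to f$ in $H_\Phi$; the same argument yields $\rho_r \to f$ in $H_{\Phi(e^{2t}z)}$. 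For each fixed $r < 1$, $\rho_r$ has strictly stronger Gaussian decay than required by either weight, so direct estimates on $\|z^k\|_\Psi$ show that the Taylor partial sums of $\rho_r$ converge to $\rho_r$ in both $H_\Phi$ and $H_{\Phi(e^{2t}z)}$. A diagonal extraction then furnishes polynomials $p_n \to f$ in the graph norm of $\iota$, so $\overline{S_t} = T$ with the stated domain. The technical heart of the argument is precisely this two-weight density; alternatively one may invoke the closability framework of \cite[Section 1.2.1]{Aleman_Viola_2014b}, which treats exactly this situation.
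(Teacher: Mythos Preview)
Your argument is correct and follows essentially the same path as the paper: conjugate via $\mathcal{T}$ to the operator $2z\partial_z+1$, check agreement with $S_t$ on the monomials $\mathcal{T}u_k=c_kz^k$, and then identify the closure with the dilation operator $R_t v(z)=e^{-t}v(e^{-2t}z)=e^{t}\mathcal{C}_{e^{-2t}}\iota\,v(z)$ equipped with its maximal domain. The only substantive difference is in Step~3: the paper simply invokes the deformation argument of \cite[Proposition~2.8]{Aleman_Viola_2014b} to conclude that $\Bbb{C}[z]$ is a core, whereas you spell out a self-contained version of that argument via the real dilations $\rho_r(z)=f(rz)$, $r\in(0,1)$ (note that $\rho_r=e^{s}R_s f$ for $s=-\tfrac12\log r>0$, so your ``dilation'' is exactly the paper's ``deformation'' along the positive semigroup axis). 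Your added Step~2, establishing closedness of $\iota$ directly from continuity of point evaluations in $H_\Psi$, is a detail the paper leaves implicit in the citation; it is correct and makes the argument more self-contained.
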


\begin{proof}
Conjugation by $\mathcal{T}$ and the exact Egorov relation \eqref{eq_NSAHO_Egorov} reduces the Cauchy problem
\[
	\left\{\begin{array}{l}\partial_t U(t,z) + Q_\theta U(t,z) = 0, \\ U(0,z) = u(z) \in L^2(\Bbb{R}), \end{array}\right.
\]
to 
\[
	\left\{\begin{array}{l}\partial_t V(t,z) + \left(2z\partial_z + 1\right)V(t,z) = 0, \\ V(0,z) = v(z) = \mathcal{T}u(z) \in H_\Phi. \end{array}\right.
\]
This latter problem has the solution
\begin{equation}\label{eq_def_Rt}
	V(t,z) = R_t v(z) = e^{-t}v(e^{-2t}z)
\end{equation}
which is unique among holomorphic functions since any solution $V(t,z)$ satisfies $\partial_t e^t V(t, e^{2t}z) \equiv 0$ (cf.\ \cite[Proposition 2.1]{Aleman_Viola_2014b}). This agrees with the solution by eigenvalues \eqref{eq_semigroup_by_eigenvalues} on $\mathcal{T}(V)$ because $\mathcal{T}u_k = c_k z^k$ for some normalizing constants $c_k$ and
\[
	R_t z^k = e^{-t}(e^{-2t}z)^k = e^{-t(1+2k)}z^k.
\]
A deformation argument \cite[Proposition 2.8]{Aleman_Viola_2014b} shows that the closure of the restriction of $R_t$ to the polynomials $\Bbb{C}[z] = \mathcal{T}(V) \subset H_\Phi$ is $R_t$ equipped with its maximal domain $\{v \in H_\Phi \::\: R_t v \in H_\Phi\}$.

The conclusion \eqref{eq_NSAHO_semigroup_embedding_equiv} follows from the observation that 
\[
	R_t = e^{t}(\mathcal{C}_{e^{-2t}})\iota
\]
as (possibly unbounded) operators on $H_\Phi$, where $R_t$ has its maximal domain and $\iota$ is equipped with the domain \eqref{eq_NSAHO_embedding_domain}.
\end{proof}

\begin{remark}\label{rem_unbdd_Hormander}
The unboundedness of certain Mehler formulas related to non-self-adjoint harmonic oscillators was also mentioned in the final remark of \cite{Hormander_1995}. The works  \cite{Graefe_Schubert_2012, Graefe_Korsch_Rush_Schubert_2015, Lasser_Schubert_Troppmann_2015} also investigate the effect of $e^{itQ_\theta}, t \in \Bbb{R}$ on Gaussians and observe that the evolution may become unbounded.
\end{remark}

When $\Phi_1(z) = \Phi(z)$ and $\Phi_2(z) = \Phi(e^{2t}z)$ with $\Phi$ from \eqref{eq_NSAHO_weight}, the quantities $a$ and $b$ from theorem \ref{thm_HPhi_embedding} or lemma \ref{lem_weight_reduction} are
\begin{equation}\label{eq_NSAHO_ab}
	a = \frac{\partial_{\bar{z}}\partial_z \Phi_2}{\partial_{\bar{z}}\partial_z \Phi_1} = e^{4\Re t}, \quad b = \frac{|\partial_z^2(\Phi_2-\Phi_1)|}{\partial_{\bar{z}}\partial_z\Phi_1} = |(e^{4t}-1)\sin\theta|.
\end{equation}
From \cite[Theorem 2.9]{Aleman_Viola_2014b}, we obtain the following characterization of boundedness and compactness of this definition of $e^{-tQ_\theta}$.

\begin{proposition}\label{prop_FBI_boundedness}
For $\theta \in (-\pi/2, \pi/2)$ and $t \in \Bbb{C}$, let $\Phi$ be as in \eqref{eq_NSAHO_weight}, and let $a = a(t,\theta)$ and $b = b(t,\theta)$ be as in \eqref{eq_NSAHO_ab} above. The operator $e^{-tQ_\theta} := \overline{S_t}$ from proposition \ref{prop_NSAHO_FBI_side} is bounded if and only if $t \in \Omega'_\theta$ where
\[
	\Omega'_\theta = \{t\::\: \Phi(z) \leq \Phi(e^{2t}z),~~\forall z \in \Bbb{C}\} = \{t\::\: a-b \geq 1\},
\]
on which the family $e^{-tQ_\theta}$ is strongly continuous in $t$, and is compact if and only if $t \in \Omega''_\theta$ where
\[
	\Omega''_\theta = \{t\::\: \Phi(z) < \Phi(e^{2t}z),~~\forall z \in \Bbb{C} \backslash\{0\}\} = \{t\::\: a-b > 1\}.
\]
\end{proposition}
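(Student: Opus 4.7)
The proof is essentially an application of the unitary equivalence established in proposition \ref{prop_NSAHO_FBI_side} together with a standard characterization of boundedness for embeddings between Gaussian-weighted Fock spaces. Since $\mathcal{T}$ and the rescaling $\mathcal{C}_{e^{-2t}}$ are unitary and $e^{t}$ is a nonzero scalar, the identity
\[
\overline{S_t} = e^{t}\mathcal{T}^*\mathcal{C}_{e^{-2t}}\iota\mathcal{T}
\]
shows at once that $\overline{S_t}$ is bounded (respectively compact) on $L^2(\Bbb{R})$ if and only if the embedding $\iota:H_\Phi\to H_{\Phi(e^{2t}z)}$, equipped with the maximal domain \eqref{eq_NSAHO_embedding_domain}, extends to a bounded (resp.\ compact) operator between these Fock spaces. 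So the statement is equivalent to characterizing when $\iota$ is bounded (resp.\ compact) in terms of $a=a(t,\theta)$ and $b=b(t,\theta)$.

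For the characterization I would apply lemma \ref{lem_weight_reduction} with $\Phi_1=\Phi$ and $\Phi_2=\Phi(e^{2t}z)$, reducing $\iota$ unitarily to $\tilde\iota:H_{\Phi_0}\to H_{\tilde\Phi}$ with $\tilde\Phi(z)=\tfrac12(a|z|^2-b\Re(z^2))$. The classical fact (invoked in the introduction via \cite[Proposition 2.4, Corollary 2.6]{Aleman_Viola_2014b} and in the sharper form of \cite[Theorem 2.9]{Aleman_Viola_2014b}) is that $\tilde\iota$ is bounded iff $\Phi_0\le\tilde\Phi$ pointwise, and compact iff the inequality is strict off the origin. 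The pointwise inequality $\tfrac12|z|^2\le\tfrac12(a|z|^2-b\Re(z^2))$ is $(a-1)|z|^2\ge b\,\Re(z^2)$; maximizing the right-hand side over the argument of $z$ gives $\Re(z^2)=|z|^2$, so the condition reduces to $a-b\ge1$. The strict version reduces identically to $a-b>1$. This produces the two sets $\Omega_\theta'$ and $\Omega_\theta''$.

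It remains to establish strong continuity of $t\mapsto\overline{S_t}$ on $\Omega_\theta'$. On the dense core $V$ from \eqref{eq_def_V_core}, the map $t\mapsto S_t u_k=e^{-t(1+2k)}u_k$ is entire, and on compact subsets of $\Omega_\theta'$ the norm $\|\overline{S_t}\|$ is uniformly bounded (by the explicit formula of theorem \ref{thm_NSAHO_norm}, or directly from continuity of $a-b-1\ge0$ and the test-function representation above). An $\varepsilon/3$ argument then gives strong continuity on all of $L^2(\Bbb{R})$.

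\textbf{Main obstacle.} The substantive content is entirely the Fock-space characterization of bounded/compact embeddings, which is quoted from \cite{Aleman_Viola_2014b}. If one wanted to reprove it here, the non-trivial direction would be ``only if'': showing that a pointwise violation of $\Phi_0\le\tilde\Phi$ forces unboundedness. The standard route is to test $\iota$ on normalized Gaussians $u_\gamma$ (as in \eqref{eq_u_gamma_def}, transported to the FBI side by $\mathcal{T}$) peaked along a direction where $\tilde\Phi<\Phi_0$; the ratio $\|u_\gamma\|_{\tilde\Phi}/\|u_\gamma\|_{\Phi_0}$ then diverges as $\gamma$ degenerates toward that direction. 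Everything else in the proof is bookkeeping.
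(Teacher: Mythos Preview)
Your proposal is correct and matches the paper's approach: the paper does not give a separate proof but simply invokes \cite[Theorem 2.9]{Aleman_Viola_2014b} for the boundedness/compactness/strong-continuity characterization of the embedding, exactly as you do. Your reduction via proposition \ref{prop_NSAHO_FBI_side} and lemma \ref{lem_weight_reduction}, together with the elementary verification that $\Phi_0\le\tilde\Phi$ is equivalent to $a-b\ge 1$ and the density-plus-uniform-bound argument for strong continuity (note that $\|\iota\|\le 1$ trivially once $\Phi_0\le\tilde\Phi$, so local uniform boundedness is immediate without appealing to theorem \ref{thm_NSAHO_norm}), fills in details the paper leaves implicit.
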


It is worth noting that this realization of $e^{-tQ_\theta}$ gives a strong solution when $t \in \Omega''_\theta$ for every $u\in L^2(\Bbb{R})$, and for certain sufficiently regular and rapidly decaying functions for other $t$.

\begin{proposition}
For $\theta \in (-\pi/2, \pi/2)$, let $Q_\theta$ be as in \eqref{eq_def_NSAHO}, and let $\mathcal{T}$ and $\Phi$ be as in example \ref{ex_NSAHO_FBI}. Suppose that $t_0 \in \Bbb{C}$ and $u \in L^2(\Bbb{R})$ are such that 
\begin{equation}\label{eq_strongcty_cond}
	\exists \eps > 0 \::\: \mathcal{T}u \in H_{\Phi(e^{2t_0}z)-\eps|z|^2}.
\end{equation}
In particular, whenever $t_0 \in \Omega''_\theta$, this holds for all $u \in L^2(\Bbb{R})$, and whenever $u \in V$ with $V$ from \eqref{eq_def_V_core}, this holds for all $t_0 \in \Bbb{C}$.

Then for every $t$ in a sufficiently small complex neighborhood of $t_0$, $e^{-tQ_\theta}u(x) := \overline{S_t}u(x) \in \mathscr{S}(\Bbb{R})$ is a Schwartz function in $x$, an analytic function in $t$, and a strong solution of $(\partial_t + Q_\theta)\overline{S_t}u = 0$.
\end{proposition}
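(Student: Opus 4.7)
The plan is to transfer everything to the FBI--Bargmann side via proposition \ref{prop_NSAHO_FBI_side} and study the explicit operator $R_t v(z) = e^{-t}v(e^{-2t}z)$ from \eqref{eq_def_Rt} acting on $v = \mathcal{T}u$. I would first verify the two ``in particular'' assertions. When $t_0 \in \Omega''_\theta$, proposition \ref{prop_FBI_boundedness} gives $\Phi(z) < \Phi(e^{2t_0}z)$ on $\Bbb{C}\setminus\{0\}$; since the difference is a real-quadratic form and the unit circle is compact, there exists $c > 0$ with $\Phi(e^{2t_0}z) - \Phi(z) \geq c|z|^2$, so any $\mathcal{T}u \in H_\Phi$ automatically lies in $H_{\Phi(e^{2t_0}z) - c|z|^2}$. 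When $u \in V$, $\mathcal{T}u$ is a polynomial (each $\mathcal{T}u_k$ is a constant multiple of $z^k$), and polynomials lie in $H_{\Psi - \eps|z|^2}$ for any strictly plurisubharmonic real-quadratic $\Psi$ and small enough $\eps > 0$.

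The central step is to upgrade the $\eps$-margin at $t_0$ to a uniform margin on a small complex disk $D$ around $t_0$. After the change of variables $w = e^{-2t}z$, showing $R_t v \in H_{\Phi - \delta|z|^2}$ reduces, up to an additive constant from the Jacobian, to the pointwise inequality of real-quadratic forms
\[
	\Phi(e^{2t_0}w) - \Phi(e^{2t}w) \leq \left(\eps - \delta e^{4\Re t}\right)|w|^2.
\]
Since the left-hand side depends continuously on $t$ and vanishes at $t = t_0$, its operator norm is below $\eps/2$ on a sufficiently small $D$, and I may then fix $\delta > 0$ to absorb $e^{4\Re t}$ uniformly on $D$. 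This simultaneously yields a uniform norm bound $\sup_{t \in D}\|R_t v\|_{H_{\Phi-\delta|z|^2}} < \infty$.

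Schwartzness of $\overline{S_t}u$ is then automatic: any $w \in H_{\Phi - \delta|z|^2}$ satisfies $z^k w \in H_\Phi$ for every $k \in \Bbb{N}$ because the monomial is absorbed by the Gaussian factor $e^{-2\delta|z|^2}$, so by \eqref{eq_FBI_Schwartz} we obtain $\overline{S_t}u = \mathcal{T}^* R_t v \in \mathscr{S}(\Bbb{R})$. For analyticity in $t$, the map $(t,z) \mapsto e^{-t}v(e^{-2t}z)$ is jointly holomorphic, which combined with the uniform $H_\Phi$-bound above yields strong analyticity of $t \mapsto R_t v$ as an $H_\Phi$-valued map via a standard Morera argument applied to $\langle R_t v, \psi\rangle$ for $\psi$ in a total subset; the same argument applied to each Schwartz seminorm of $\mathcal{T}^* R_t v$ promotes this to $\mathscr{S}(\Bbb{R})$-valued analyticity of $t \mapsto \overline{S_t}u$.

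Finally, a direct pointwise computation on the FBI side gives
\[
	\partial_t R_t v(z) = -e^{-t}v(e^{-2t}z) - 2z e^{-3t} v'(e^{-2t}z) = -(2z\partial_z + 1)R_t v(z),
\]
and applying $\mathcal{T}^*$ with the exact Egorov relation \eqref{eq_NSAHO_Egorov} (valid on $\mathscr{S}(\Bbb{R})$) converts this into the identity $(\partial_t + Q_\theta)\overline{S_t}u = 0$ in $L^2(\Bbb{R})$. The main obstacle will be the bookkeeping in the quadratic-form inequality above, where one must balance two competing effects of moving $t$ away from $t_0$, namely the perturbation of the weight $\Phi(e^{2t}z)$ and the Jacobian factor $e^{4\Re t}$ from the change of variables; once that is controlled, everything else is routine manipulation on the Bargmann side.
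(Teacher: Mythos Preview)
Your proposal is correct and follows essentially the same route as the paper: transfer to the Bargmann side via proposition \ref{prop_NSAHO_FBI_side}, use the $\eps$-margin in the weight to absorb both the monomials $z^k$ (giving Schwartzness through \eqref{eq_FBI_Schwartz}) and a perturbation of $t$, then read off the evolution equation from the explicit formula for $R_t$ combined with \eqref{eq_NSAHO_Egorov}. The paper is simply more terse: where you set up the pointwise quadratic-form inequality $\Phi(e^{2t_0}w)-\Phi(e^{2t}w)\le(\eps-\delta e^{4\Re t})|w|^2$, the paper just says ``replacing $\eps$ by $\eps/2$ allows us to establish \eqref{eq_strongcty_cond} in a neighborhood of $t_0$,'' and where you run a Morera-type argument for analyticity, the paper only notes that analyticity follows from differentiating $R_t\mathcal{T}u(z)$.
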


\begin{proof}
The fact that \eqref{eq_strongcty_cond} implies that $\overline{S_{t_0}}u \in \mathscr{S}(\Bbb{R})$ follows from the condition in \eqref{eq_FBI_Schwartz}, since by \eqref{eq_NSAHO_semigroup_embedding_equiv} and unitarity of $\mathcal{C}_{e^{-2t}}:H_{\Phi(e^{2t}z)} \to H_{\Phi}$, we may compute
\[
	\begin{aligned}
		\|z^k \mathcal{T}\overline{S_{t_0}}u\|_\Phi^2 &= \|z^k e^{t_0}(\mathcal{C}_{e^{-2{t_0}}})\iota \mathcal{T}u\|_\Phi^2
		\\ &= \|e^{t_0} (\mathcal{C}_{e^{-2{t_0}}})\iota(e^{2{t_0}}z)^k \mathcal{T}u\|_\Phi^2
		\\ &= \|e^{(1+2k){t_0}}z^k\mathcal{T}u\|_{\Phi(e^{2{t_0}}z)}^2
		\\ &= e^{(2+4k){t_0}}\int |z|^{2k}e^{-2\eps|z|^2}|u(z)|^2 e^{-2(\Phi(e^{2t_0}z) - \eps|z|^2)}\,d\Re z\,d\Im z.
	\end{aligned}
\]
This last quantity is finite, since $|z|^{2k}e^{-2\eps|z|^2}$ is bounded on $\Bbb{C}$ for any $k \in \Bbb{N}$ fixed and since we are working under the assumption \eqref{eq_strongcty_cond}. Furthermore, replacing $\eps$ by $\eps/2$ allows us to establish \eqref{eq_strongcty_cond} in a neighborhood of $t_0$. Then analyticity in $t$ follows from differentiating $R_t\mathcal{T}u(z)$ from \eqref{eq_def_Rt}, and $(\partial_t + Q_\theta)\overline{S_t}u = 0$ by the exact Egorov relation \eqref{eq_NSAHO_Egorov}.

The condition \eqref{eq_strongcty_cond} holds when $t_0 \in \Omega''_\theta$ for any $u \in L^2(\Bbb{R})$ since $\mathcal{T}u \in H_\Phi$ and strict convexity implies the existence of some $\eps > 0$ such that
\[
	\Phi(e^{2t_0}z) - \Phi(z) \geq \eps |z|^2.
\]
The condition holds when $u \in V$ for any $t_0 \in \Bbb{C}$ because $\Phi(e^{2t_0}z)$ is always strictly convex and $\mathcal{T}u$ is a polynomial.
\end{proof}

Having identified $\Omega'_\theta$, the maximal set of $t \in \Bbb{C}$ where $e^{-tQ_\theta}$ can be bounded as a closed operator whose domain includes $V$, we check that this is identical to $\overline{\Omega_\theta}$ in \eqref{eq_def_Omega_theta}, the set of $t \in \Bbb{C}$ for which the Mehler formula $M_{-tq_\theta}$ in example \ref{ex_NSAHO_Mehler} is bounded as a function of $(x,\xi) \in \Bbb{R}^2$.

\begin{proposition}\label{prop_bddness_same}
For $\theta \in (-\pi/2, \pi/2)$, let $\Omega_\theta$ be as in \eqref{eq_def_Omega_theta} and let $\Omega'_\theta$ and $\Omega''_\theta$ be as in proposition \ref{prop_FBI_boundedness}. Then
\[
	\Omega'_\theta = \overline{\Omega_\theta}
\]
and
\[
	\Omega''_\theta = \opnm{int} \Omega_\theta.
\]
\end{proposition}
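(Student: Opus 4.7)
The plan is to translate the algebraic condition $a - b \geq 1$ that defines $\Omega'_\theta$ directly into the condition $A \geq 0$ on the quantity from \eqref{eq_def_Acste}, and then match this with the defining condition of $\overline{\Omega_\theta}$. Writing $t = r + is$ and using $e^{4t} - 1 = 2 e^{2t} \sinh 2t$, from \eqref{eq_NSAHO_ab} we get $a = e^{4r}$ and $b = 2e^{2r}|\sinh 2t||\sin\theta|$. Since $b \geq 0$, the requirement $a - b \geq 1$ already forces $r \geq 0$; on this half-plane $a - 1 = 2 e^{2r}\sinh 2r$ is nonnegative, so dividing by the positive factor $2e^{2r}$ and squaring reduces the inequality to
\[
    \sinh^2 2r\,\cos^2\theta \;\geq\; \sin^2 2s\,\sin^2\theta.
\]

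Next I would use the identity $\tanh(r+is) = (\sinh 2r + i \sin 2s)/(\cosh 2r + \cos 2s)$ (a routine expansion via sum formulas for $\sinh$ and $\cosh$) to read off $|\sinh 2t|^2 = \sinh^2 2r + \sin^2 2s$, and to deduce that for $r > 0$, where $\Re \tanh t > 0$ and $\phi = \arg \tanh t \in (-\pi/2, \pi/2)$ is well-defined, $|\sinh 2t|^2 \cos 2\phi = \sinh^2 2r - \sin^2 2s$. Substituting $\cos^2\theta = (1 + \cos 2\theta)/2$ and $\sin^2\theta = (1 - \cos 2\theta)/2$ then produces the key identity
\[
    \sinh^2 2r\,\cos^2\theta - \sin^2 2s\,\sin^2\theta \;=\; \tfrac{1}{2}|\sinh 2t|^2\bigl(\cos 2\phi + \cos 2\theta\bigr) \;=\; A,
\]
so on $\{r > 0\}$ the condition $a - b \geq 1$ is equivalent to $A \geq 0$. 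Factoring $\cos 2\phi + \cos 2\theta = 2\cos(\phi+\theta)\cos(\phi-\theta)$ and using the elementary identity $|\phi+\theta| + |\phi-\theta| = 2\max(|\phi|,|\theta|) < \pi$ for $|\phi|,|\theta| < \pi/2$, I would conclude that $A \geq 0$ forces both cosine factors to be nonnegative, i.e.\ $|\phi|+|\theta| \leq \pi/2$, which is exactly the condition $t \in \overline{\Omega_\theta}$.

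To finish, the imaginary axis $r = 0$ is handled separately: there $a = 1$, so $a - b \geq 1$ forces $b = 0$ and hence (for $\theta \neq 0$) $\sin 2s = 0$, giving $t \in \frac{i\pi}{2}\Bbb{Z}$, precisely the points added to $\Omega_\theta$ to produce $\overline{\Omega_\theta}$ in the introduction. The same chain with strict inequalities shows that $a - b > 1$ forces $r > 0$ (it cannot hold when $a = 1$) and $A > 0$, equivalently $|\phi| < \pi/2 - |\theta|$, yielding $\Omega''_\theta = \opnm{int}\Omega_\theta$. The only step that is not pure bookkeeping is recognizing the algebraic identity collapsing the squared inequality to $A$; this is the main (though mild) obstacle, while everything else is standard hyperbolic-trigonometric computation.
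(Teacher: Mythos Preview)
Your proof is correct and uses the same basic hyperbolic--trigonometric identities as the paper (in particular $|\sinh 2t|^2 = \sinh^2 2r + \sin^2 2s$ and $\tan\phi = \sin 2s/\sinh 2r$ for $r>0$), together with the same treatment of the cases $\theta = 0$ and $\Re t = 0$.

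The one genuine difference is organizational rather than technical: the paper rewrites $a-b\geq 1$ as $\sin^2 2s/\sinh^2 2r \leq \cot^2|\theta|$ and matches this directly to $|\arg\tanh t| \leq \pi/2 - |\theta|$, whereas you collapse the squared inequality to the single statement $A \geq 0$, with $A$ the quantity from \eqref{eq_def_Acste}, and then read off the angular condition from the factorization $\cos 2\phi + \cos 2\theta = 2\cos(\phi+\theta)\cos(\phi-\theta)$. Your packaging is arguably cleaner, since it ties the boundedness threshold $\partial\Omega_\theta$ directly to the vanishing of the same $A$ that governs the norm in theorem~\ref{thm_NSAHO_norm}; the paper's route is a bit more hands-on but avoids the (mild) argument that both cosine factors must be nonnegative. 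Two small presentational points: the identity $|\sinh 2t|^2 = \sinh^2 2r + \sin^2 2s$ is used implicitly in your first displayed inequality before you derive it, and it does not literally ``read off'' from the $\tanh$ formula (it needs its own one-line computation); neither affects the validity of the argument.
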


\begin{proof}
We check the first equality; the second follows from making the inequalities strict. 

Let $a$ and $b$ be as in \eqref{eq_NSAHO_ab}. When $\theta = 0$, the condition $a-b \geq 1$ reduces to $e^{4\Re t} \geq 1$, which agrees with $\overline{\Omega_0} = \{\Re t \geq 0\}$. When $\theta \neq 0$ and $\Re t = 0$, it is easy to see that $a-b \geq 1$ if and only if $t \in \frac{i\pi}{2}\Bbb{Z}$. We will therefore suppose that $\theta \neq 0$ and $\Re t > 0$.

So long as $\Re t > 0$, the condition $a-b \geq 1$ with $a,b$ from \eqref{eq_NSAHO_ab} is equivalent to
\begin{equation}\label{eq_bdd_condition_FBI}
	\frac{e^{4\Re t} - 1}{|e^{4t}-1|} = \frac{\sinh 2\Re t}{|\sinh 2t|} \geq \sin|\theta|.
\end{equation}
Taking the square of the reciprocal (since we are assuming that $\theta \neq 0$ and $\Re t > 0$), we see that $a-b \geq 1$ is equivalent to
\begin{equation}
	\frac{|e^{4t}-1|^2}{(e^{4\Re t} - 1)^2} - 1 \leq \cot^2 |\theta|.
\end{equation}

On the other hand, note that so long as $\Re t \neq 0$,
\[
	\arg \tanh t = \arg\left((e^t - e^{-t})(e^{\bar{t}} + e^{-\bar{t}})\right) = \arctan \left(\frac{\sin 2\Im t}{\sinh 2\Re t}\right).
\]
Therefore, when $\theta \neq 0$ and $|\theta| < \pi/2$, the condition $t \in \Omega_\theta$ defined in \eqref{eq_def_Omega_theta} is equivalent to assuming that $\Re t > 0$ and that
\begin{equation}\label{eq_bdd_condition_real}
	\frac{(\sin 2\Im t)^2}{(\sinh 2 \Re t)^2} \leq \cot^2|\theta|.
\end{equation}

Equivalence of \eqref{eq_bdd_condition_FBI} and \eqref{eq_bdd_condition_real} follows from the computation
\begin{multline*}
	\frac{|e^{4t}-1|^2}{(e^{4\Re t} - 1)^2} - 1 = \frac{|e^{2t}|^2 |\sinh 2t|^2}{e^{4\Re t}(\sinh 2\Re t)^2} - 1
	\\ = \frac{|\sinh 2t|^2 - (\sinh 2\Re t)^2}{(\sinh 2\Re t)^2} = \frac{(\sin 2\Im t)^2}{(\sinh 2\Re t)^2},
\end{multline*}
which in turn follows from
\begin{multline*}
	|\sinh 2t|^2 = \frac{1}{4}(e^{2t}-e^{-2t})(e^{2\bar{t}} - e^{-2\bar{t}}) 
	\\= \frac{1}{4}\left(2\cosh 4\Re t - 1 - 1 + 2\cos 4\Im t\right) = (\sinh 2\Re t)^2 + (\sin 2\Im t)^2.
\end{multline*}
\end{proof}

\subsection{Computation of the norm}\label{ssec_norm_FBI}

Using lemma \ref{lem_weight_reduction}, we assume in what follows that 
\[
	\Phi_1(z) = \frac{1}{2}|z|^2,
\]
\[
	\Phi_2(z) = \frac{1}{2}\left(a|z|^2 - b\Re z^2\right)
\]
with $b \geq 0$ and $a-b \geq 1$. We can realize any $a, b$ satisfying these conditions via \eqref{eq_NSAHO_ab} by taking
\[
	t = \frac{i\pi}{4} + \frac{1}{4}\log a, \quad \theta = \arcsin \frac{b}{a+1},
\]
so every such embedding is unitarily equivalent (via metaplectic and FBI--Bargmann transformations) to some non-self-adjoint harmonic oscillator semigroup. Therefore, by proposition \ref{prop_NSAHO_FBI_side} and theorem \ref{thm_NSAHO_SVD}, we know in advance that the operator $\left((\iota)^*\iota\right)^{1/2}$ is of harmonic oscillator or heat semigroup type, meaning that it is unitarily equivalent --- but with an operator corresponding to a complex canonical transformation --- to a constant multiple of a semigroup generated by a harmonic oscillator or, in the borderline case, multiplication by a Gaussian. Following proposition \ref{prop_gaussians}, we may obtain the norm as the supremum over Gaussians $u_\gamma$ defined in \eqref{eq_u_gamma_def}.

The FBI--Bargmann transforms preserve Gaussians, in the sense of kernels of linear forms in $(x,D_x)$ described in \cite[Section 5]{Hormander_1995} and in the proof of proposition \ref{prop_gaussians}. We use the classical Bargmann transform $\mathcal{T}$, which may be taken from example \ref{ex_NSAHO_FBI} with $\theta = 0$; note that $\mathcal{T}:L^2(\Bbb{R}) \to H_{\Phi_1}$. The canonical transformation \eqref{eq_K_NSHAO_FBI} with $\theta = 0$ induces by \eqref{eq_lft_gaussians} the Cayley transform
\[
	\Bbb{C}_+ \ni \gamma \mapsto L(\gamma) = \frac{\gamma-1}{\gamma+1} \in \{|\mu| < 1\}
\]
instead of an automorphism of $\Bbb{C}_+$. Therefore the set of integrable Gaussians is transformed from 
\[
	\{u_\gamma(x) \,:\, \gamma \in \Bbb{C}_+\} \subset L^2(\Bbb{R})
\]
to
\[
	\{u_\gamma(z) \,:\, |\gamma| < 1\} \subset H_{\Phi_1}.
\]
(The normalization factor in $H_{\Phi_1}$ is different from the one in $L^2(\Bbb{R})$, but this does not meaningfully change the analysis which follows.) The boundary $\{u_\gamma(x) \,:\, \gamma \in i\Bbb{R} \cup \{+\infty\}\}$, with the convention $u_{+\infty} = \delta_0$, is replaced by $\{u_\gamma(z)\,:\, |\gamma| = 1\}$. We remark that the correspondance $L(\infty) = 1$ agrees with the direct computation that $\mathcal{T}\delta_0 (z) = c_1 e^{-z^2/2} = c_2 u_{1}(z)$ for some constants $c_1$ and $c_2$.

This reasoning allows us to conclude that, instead of finding $\|\iota\|$ by maximizing over all $u\in H_{\Phi_1}$, it suffices to maximize over the set of Gaussians in $H_{\Phi_1}$:
\[
	\|\iota\| = \sup_{|\gamma| < 1} \frac{\|u_\gamma\|_{\Phi_2}}{\|u_\gamma\|_{\Phi_1}}.
\]
Maximizing this quantity is an elementary exercise which we detail below. 

In view of theorem \ref{thm_higher_dimension}, this strategy could also be used to obtain the norm of any operator $e^{-tQ}$, when this operator is compact, with $Q$ quadratic and supersymmetric as in \cite{Aleman_Viola_2014b}.

Whenever $|\gamma| < 1$, we find, by changing variables to replace $\Re(\gamma z^2)$ by $|\gamma|\Re(z^2)$, that
\[
	\|u_\gamma\|_{\Phi_1}^2 = \frac {\sqrt{\pi\Re \gamma}}{\sqrt{1-|\gamma|^2}}.
\]
By a similar computation, if $|\gamma| < 1$ and $a-b \geq 1$, then
\[
	\frac{\|u_\gamma\|_{\Phi_2}}{\|u_\gamma\|_{\Phi_1}} = \left(\frac{1-|\gamma|^2}{a^2 - |b-\gamma|^2}\right)^{1/4}.
\]
Naturally, we maximize the fourth power of this quantity.

The partial derivative with respect to $\Im \gamma$ is 
\[
	\frac{\partial}{\partial \Im \gamma}\left(\frac{1-|\gamma|^2}{a^2 - |b-\gamma|^2}\right) = \frac{2(\Im \gamma)(1-a^2-2b\Re \gamma + b^2)}{(a^2 - |b-\gamma|^2)^2},
\]
and using $a-b \geq 1$ and $|\gamma| < 1$,
\[
	\begin{aligned}
	1-a^2-2b\Re \gamma + b^2 & = 1-(a-b)(a+b) - 2b\Re \gamma
	\\ &\leq 1-(a+b)-2b\Re \gamma
	\\ &\leq 1-a-b-2b
	\\ &= 1-a+b \leq 0.
	\end{aligned}
\]
Furthermore, apart from the trivial case $(a,b) = (1,0)$, one of the two inequalities must be strict. The derivative therefore has the same sign as $-\Im \gamma$, giving a global maximum when $\Im \gamma = 0$.

If $b = 0$, then the same reasoning gives that the maximum is attained at $\gamma = 0$, from which we deduce that $\|\iota\| = a^{-1/2}$ when $b = 0$.

Assuming, as we therefore may, that $\gamma \in \Bbb{R}$ and that $b \neq 0$,
\[
	\frac{d}{d\gamma}\left(\frac{1-\gamma^2}{a^2 - (b-\gamma)^2}\right) = -2\frac{b\gamma^2 + (a^2-b^2-1)\gamma + b}{(a^2 - (\gamma - b)^2)^2}.
\]
If $a-b > 1$, the maximum is therefore at the root of the numerator
\[
	\gamma = \frac{1}{2b}\left(1 - a^2 + b^2 + \sqrt{(a^2 - b^2 - 1)^2 - 4b^2}\right),
\]
as in theorem \ref{thm_HPhi_embedding}. (The condition $a-b > 1$ implies that this is the only root satisfying $|\gamma| \leq 1$.)
When $a-b = 1$, we obtain the same result in the limit $\gamma \to -1^+$. This proves theorem \ref{thm_HPhi_embedding}.

\section{Extensions to any dimension}\label{sec_higher_dimension}

If $Q = q^w(x,D_x)$ with $q:\Bbb{R}^{2n}\to\Bbb{C}$ satisfying $\Re q(x,\xi) \geq \frac{1}{C}|(x,\xi)|^2$, the Mehler formula \eqref{eq_Mehler} gives that the Weyl symbol of $e^{-tQ}$ is given by 
\[
	\opnm{Symb}(e^{-tQ}) = c(t,q)\exp\left(-\frac{1}{2}(x,\xi)\cdot A(t,q) (x,\xi)\right)
\]
where $c(t,q) \in \Bbb{C}$ and $A(t,q)$ is a symmetric matrix. By \cite[Theorem 4.2]{Hormander_1995}, the real part $\Re A$ is positive definite. To find the operator norm of $e^{-tQ}$, it suffices to find the operator norm of the Weyl quantization of the exponential, and we begin by computing the sharp product of two such symbols. This result may also be found in, for instance, \cite[Theorem (5.6)]{Folland_1989}. We state the result only for Gaussian symbols which tend to zero as $|x|+|\xi| \to \infty$ in order to avoid the significant and interesting complications discussed in \cite[pp.\ 427-436]{Hormander_1995}.

\begin{proposition}\label{prop_sharp_higher_dim}
	Let $A_1$ and $A_2$ be symmetric $2n \times 2n$ matrices with positive definite real parts, and for $j = 1,2$, let
	\[
		a_j(x,\xi) = \exp\left(-\frac{1}{2}(x,\xi)\cdot A_j (x,\xi)\right).
	\]
	With $J$ as in \eqref{eq_def_J}, define
	\begin{equation}\label{eq_def_B_D}
		\begin{aligned}
		D &= 1 - \frac{1}{4}A_2JA_1J,
		\\ B &= A_1 + \left(1+\frac{i}{2}A_1J\right)D^{-1}A_2\left(1-\frac{i}{2}JA_1\right).
		\end{aligned}
	\end{equation}
	Then
	\[
		a_1 \sharp a_2(x,\xi) = \frac{1}{\sqrt{\det D}}\exp\left(-\frac{1}{2}(x,\xi) \cdot B (x,\xi)\right).
	\]	
\end{proposition}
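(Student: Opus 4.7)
The approach is a direct evaluation of the Gaussian integral defining the Moyal product. Starting from the Weyl sharp product in integral form
\[
(a_1 \sharp a_2)(X) = \pi^{-2n}\iint_{\mathbb{R}^{4n}} e^{-2i\sigma(X-Y,\,X-Z)} a_1(Y)\,a_2(Z)\,dY\,dZ,
\]
I would substitute $u = X-Y$, $v = X-Z$ and collect terms. The exponent becomes a quadratic form in the auxiliary variable $W = (u,v)^T \in \mathbb{R}^{4n}$ with an $X$-dependent source:
\[
-\tfrac{1}{2} X\cdot(A_1+A_2)X + X\cdot L W - \tfrac{1}{2} W\cdot \widetilde M W,
\]
where $L = (A_1,\,A_2)$ is $2n\times 4n$ and $\widetilde M$ is the $4n\times 4n$ matrix with diagonal blocks $A_1,A_2$ and anti-diagonal blocks $\pm 2iJ$. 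Since $\Re \widetilde M$ is block diagonal with positive definite blocks, the integral in $W$ converges absolutely; completing the square and evaluating it yields
\[
(a_1\sharp a_2)(X) = \frac{4^n}{\sqrt{\det \widetilde M}}\,\exp\!\Bigl(-\tfrac{1}{2} X\cdot\bigl[(A_1+A_2) - L\widetilde M^{-1} L^T\bigr] X\Bigr).
\]

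The remaining work is purely algebraic: reduce the $4n\times 4n$ quantities $\det \widetilde M$ and $L\widetilde M^{-1} L^T$ to the $2n\times 2n$ expressions $\det D$ and $B$ from the statement. I would apply the Schur decomposition of $\widetilde M$ with top-left block $A_1$, producing the Schur complement $S = A_2 - 4 J A_1^{-1} J$. Using $J^{-1} = -J$ one can factor
\[
S = -4 J A_1^{-1}\bigl(I - \tfrac{1}{4} A_1 J A_2 J\bigr) J,
\]
so $\det S = 16^n (\det A_1)^{-1} \det\bigl(I - \tfrac{1}{4} A_1 J A_2 J\bigr)$; applying the identity $\det(I-XY) = \det(I - YX)$ turns the last factor into $\det D$. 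Since $\det J = 1$, we obtain $\det \widetilde M = \det A_1 \cdot \det S = 16^n \det D$, which makes the prefactor collapse exactly to $1/\sqrt{\det D}$.

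The analogous block computation gives $L\widetilde M^{-1} L^T = A_1 + (A_2 - 2iJ) S^{-1}(A_2 + 2iJ)$, so that
\[
B = A_2 - (A_2 - 2iJ)\,S^{-1}\,(A_2 + 2iJ).
\]
Substituting the factorization $S^{-1} = -\tfrac{1}{4} J\bigl(I - \tfrac{1}{4} A_1 J A_2 J\bigr)^{-1} A_1 J$ and using $(A_2 \mp 2iJ) J = J A_2 \mp 2iI = \mp 2i\bigl(I \pm \tfrac{i}{2}A_2 J\bigr)$ to absorb the outer factors of $J$ rewrites $B$ in precisely the form stated in the proposition. The main obstacle is this final symbolic step: the derivation naturally produces $B$ in terms of $\widetilde D = I - \tfrac{1}{4} A_1 J A_2 J$, whereas the statement uses $D = I - \tfrac{1}{4} A_2 J A_1 J$; the two formulas describe the same operator and agree as quadratic forms on $\mathbb{R}^{2n}$, but checking the equality directly requires some careful bookkeeping with $J^{-1} = -J$ and the relation $(JA_1J)^{-1} = JA_1^{-1}J$, which is where most of the work of the proof will lie.
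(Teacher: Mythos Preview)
Your computation is correct and, at its core, is the same as the paper's: both evaluate a $4n$-dimensional Gaussian integral and reduce the resulting $4n\times 4n$ block matrix. The organisation differs, however, and the paper's choice neatly sidesteps the final bookkeeping you flag as the main obstacle. Instead of the integral-kernel form of the Moyal product, the paper writes $a_1\sharp a_2$ as $e^{\frac{i}{2}\sigma((D_x,D_\xi),(D_y,D_\eta))}a_1(x,\xi)a_2(y,\eta)\big|_{(y,\eta)=(x,\xi)}$ and realises this as two successive Fourier transforms of Gaussians. After the first transform the block matrix that appears is
\[
\tilde{A} = \begin{pmatrix} A_1^{-1} & -\tfrac{i}{2}J \\[2pt] \tfrac{i}{2}J & A_2^{-1}\end{pmatrix},
\]
essentially the dual of your $\widetilde{M}$; the second transform leaves $\tilde{A}^{-1}$ in the exponent. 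The paper then exhibits an explicit block LU factorisation of $\tilde{A}^{-1}$ whose middle factors are exactly $(1+\tfrac{i}{2}A_1J)$, $D^{-1}A_2$, and $(1-\tfrac{i}{2}JA_1)$, so that summing the four blocks of $\tilde{A}^{-1}$ along the diagonal produces $B$ in precisely the stated form, while $\det\tilde{A}^{-1}=\det(A_1DA_2)$ cancels against the prefactors from the two Fourier transforms. Your single-integral Schur-complement route is equally valid and arguably more self-contained, but the price is the $D$-versus-$\widetilde{D}$ rearrangement at the end; the paper's two-step route is set up so that $D$, in the order $A_2JA_1J$, is what emerges from the row reduction without further manipulation.
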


\begin{proof}
	Taking the Fourier transform of a Gaussian in $Z_* = (X_*, Y_*) = (x_*, \xi_*, y_*, \eta_*)$ gives that
	\begin{multline*}
	e^{\frac{i}{2}\sigma((D_x, D_\xi), (D_y, D_\eta))}a_1(x,\xi)a_2(y,\eta)
	\\ = (2\pi)^{-4n} \int e^{iZ^*(Z-Z_*)+\frac{i}{2}(\xi^*y^*-\eta^*x^*) - \frac{1}{2}X_*\cdot A_1X_* - \frac{1}{2}Y_*\cdot A_2Y_*}\,dZ_*dZ^*
	\\ = \frac{(2\pi)^{-2n}}{\sqrt{\det(A_1A_2)}}\int e^{iZ^*\cdot Z + \frac{i}{2}(x^*, \xi^*)\cdot J(y^*, \eta^*) - \frac{1}{2}X^*\cdot A_1^{-1}X^* - \frac{1}{2}Y^*\cdot A_2^{-1}Y^*} \,dZ^*.
	\end{multline*}
	The exponent is $iZ^*\cdot Z-\frac{1}{2}Z^* \cdot \tilde{A}Z^*$ where
	\[
		\tilde{A} = \left(\begin{array}{cc} A_1^{-1} & -\frac{i}{2}J \\ \frac{i}{2}J & A_2^{-1}\end{array}\right).
	\]
	Again using the formula for the Fourier transform of a Gaussian, we obtain that
	\[
		e^{\frac{i}{2}\sigma((D_x, D_\xi), (D_y, D_\eta))}a_1(x,\xi)a_2(y,\eta) = \frac{1}{\sqrt{\det A_1A_2\tilde{A}}}\exp\left(-\frac{1}{2}Z\cdot \tilde{A}^{-1}Z\right).
	\]

	We note that $D$ defined in \eqref{eq_def_B_D} is invertible because
	\[
		\langle A_2^{-1}X, DX\rangle = \langle X, A_2^{-1}X\rangle + \frac{1}{4}\langle JX, A_1JX\rangle
	\]
	has positive real part whenever $X \neq 0$. We perform row reduction to find $\tilde{A}^{-1}$ as follows:
	\[
		\left(\begin{array}{cc}1 & \frac{i}{2}A_1J \\ 0 & 1\end{array}\right)\left(\begin{array}{cc} 1 & 0 \\ 0 & D^{-1}A_2\end{array}\right) \left(\begin{array}{cc}1 & 0 \\ -\frac{i}{2}J & 1\end{array}\right)\left(\begin{array}{cc}A_1 & 0 \\ 0 & 1\end{array}\right)\tilde{A} = 1.
	\]
	Observing that $\det \tilde{A}^{-1} = \det A_1DA_2$ and expanding $(X,X)\cdot\tilde{A}^{-1}(X,X)$ gives the proposition.
\end{proof}

This allows us to compute the norm in $\mathcal{L}(L^2(\Bbb{R}^{n}))$ of the Weyl quantization of any integrable Gaussian on $\Bbb{R}^{2n}$.

\begin{theorem}\label{thm_higher_dimension}
	Let $a(x,\xi):\Bbb{R}^{2n} \to \Bbb{C}$ be given by
	\[
		a(x,\xi) = \exp\left(-\frac{1}{2}(x,\xi)\cdot A (x,\xi)\right),
	\]
	for $A$ a symmetric $2n\times 2n$ matrix with $\Re A$ positive definite, and let $a^w(x,D_x)$ be the Weyl quantization of $a(x,\xi)$ as in \eqref{eq_def_Weyl}. With $J$ as in \eqref{eq_def_J}, define
	\[
		D = 1 - \frac{1}{4}AJA^*J
	\]
	and write
	\[
		B = A^* + \left(1+\frac{i}{2}A^*J\right)D^{-1}A\left(1-\frac{i}{2}JA^*\right).
	\]
	Then, as an operator on $L^2(\Bbb{R}^n)$,
	\[
		\|a^w(x,D_x)\| = (\det D)^{-1/4}\mathop{\prod_{2\mu \in \opnm{Spec}(-JB)}}_{\Im \mu > 0} (1 -i\mu)^{-1/2},
	\]
	with $\mu$, the eigenvalues of $-\frac{1}{2}JB$, in the product repeated according to their algebraic multiplicity.
\end{theorem}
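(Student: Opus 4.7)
The plan mirrors the approach used for Theorem~\ref{thm_NSAHO_SVD}. Since $\|a^w(x,D_x)\|^2=\|a^w(x,D_x)^*a^w(x,D_x)\|$, the first step is to compute the Weyl symbol of the composition. The symbol of $a^w(x,D_x)^*$ is $\overline{a(x,\xi)}=\exp(-\tfrac12(x,\xi)\cdot A^*(x,\xi))$, since $A$ is symmetric. Applying Proposition~\ref{prop_sharp_higher_dim} with $A_1=A^*$ and $A_2=A$ produces the matrices $D$ and $B$ of the theorem statement and yields
\[
\opnm{Symb}\bigl(a^w(x,D_x)^*a^w(x,D_x)\bigr)=(\det D)^{-1/2}\exp\!\left(-\tfrac12(x,\xi)\cdot B(x,\xi)\right).
\]

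Next I would extract structural information about $B$ and $D$ from operator-theoretic properties of $a^w(x,D_x)^*a^w(x,D_x)$. Because this operator is self-adjoint, its Weyl symbol is real-valued, forcing $\det D>0$ and $B$ to be real symmetric. Because $\Re A$ is positive definite, $a\in L^1\cap L^2(\Bbb{R}^{2n})$ and $a^w(x,D_x)$ is Hilbert--Schmidt, so $a^w(x,D_x)^*a^w(x,D_x)$ is a positive, compact, self-adjoint operator whose Weyl symbol is a Gaussian tending to zero at infinity; the higher-dimensional analogue of Corollary~\ref{cor_positive_compact} then forces $B$ to be positive definite. Williamson's normal form (the higher-dimensional generalization of Example~\ref{ex_HO_reduce}) now produces a real linear canonical transformation $K$ on $\Bbb{R}^{2n}$ and numbers $r_1,\dots,r_n>0$ such that
\[
\tfrac12(x,\xi)\cdot B(x,\xi)\,\circ\, K=\sum_{j=1}^n r_j(x_j^2+\xi_j^2),
\]
so the fundamental matrix $F=-\tfrac12 JB$ has eigenvalues $\{\pm ir_j\}_{j=1}^n$ by the block-diagonal computation of Example~\ref{ex_HO_reduce}.

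Finally, I would assemble the answer. Let $\mathcal{U}$ be a metaplectic quantization of $K$ (Definition~\ref{def_metaplectic}). Then $\mathcal{U}a^w(x,D_x)^*a^w(x,D_x)\mathcal{U}^*$ equals $(\det D)^{-1/2}\bigotimes_{j=1}^n(a_{r_j})^w(x_j,D_{x_j})$ for $a_r(x,\xi)=e^{-r(x^2+\xi^2)}$, whose norm is the product of factor norms. Proposition~\ref{prop_HO_Mehler_all_times} gives $\|(a_{r_j})^w\|=(1+r_j)^{-1}$, hence
\[
\|a^w(x,D_x)\|^2=(\det D)^{-1/2}\prod_{j=1}^n(1+r_j)^{-1}.
\]
The spectrum of $-JB$ consists of $\{\pm 2ir_j\}_{j=1}^n$, so writing $2\mu\in\opnm{Spec}(-JB)$ the condition $\Im\mu>0$ selects exactly $\mu_j=ir_j$ and $1-i\mu_j=1+r_j$. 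Taking the square root of the previous display gives the claimed formula.

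The main obstacle is the passage from the linear algebra of the sharp product to the operator-theoretic normalization: I must justify that $B$ is real and positive definite (using self-adjointness, compactness, and positivity of $(a^w)^*a^w$), carry out the higher-dimensional Williamson reduction via a \emph{real} symplectic transformation, and verify that the parameters $r_j$ produced by the normal form are precisely the positive imaginary parts of the eigenvalues of $-\tfrac12 JB$. Once these algebraic facts are secured, the norm calculation reduces to applying the one-dimensional statement of Proposition~\ref{prop_HO_Mehler_all_times} on each tensor factor.
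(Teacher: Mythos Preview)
Your proposal is correct and follows essentially the same route as the paper: compute the symbol of $(a^w)^*a^w$ via Proposition~\ref{prop_sharp_higher_dim} with $A_1=A^*$, $A_2=A$, deduce that $B$ is real symmetric positive definite from the fact that $(a^w)^*a^w$ is self-adjoint, positive, and compact, symplectically reduce to a tensor product of one-dimensional harmonic oscillators via Williamson's normal form, and apply Proposition~\ref{prop_HO_Mehler_all_times} factor by factor. The paper additionally remarks that positivity of $(a^w)^*a^w$ forces each $r_j\in(0,1)$, but this is not needed for the norm formula since Proposition~\ref{prop_HO_Mehler_all_times} already covers all $r_j>0$.
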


\begin{proof}
	From proposition \ref{prop_sharp_higher_dim}, the symbol of $(a^w(x,D_x))^*a^w(x,D_x)$ is
	\[
		a(x,\xi) \sharp a(x,\xi) = \frac{1}{\sqrt{\det D}}\exp\left(-\frac{1}{2}(x,\xi) \cdot B (x,\xi)\right).
	\]
	Since $(a^w)^*a^w$ is a bounded self-adjoint compact operator on $L^2(\Bbb{R}^n)$, the matrix $B$ is real symmetric and positive definite. Therefore the exponent is symplectically equivalent to some harmonic oscillator symbol, and since the eigenvalues of the fundamental matrix are symplectic invariants, there is a unitary equivalence via an element of the metaplectic group
	\[
		-\frac{1}{2}(x,\xi) \cdot B (x,\xi) \sim -\sum_{j=1}^{n} \frac{\mu_j}{i}(x_j^2 + \xi_j^2)
	\]
	for $\mu_j$, with $j = 1,\dots, n$, those eigenvalues of $\frac{1}{2}JB$ for which $\mu_j/i > 0$, repeated for multiplicity.

	Therefore, up to unitary equivalence, $(a^w)^* a^w$ can be written as a tensor product of harmonic oscillator semigroups. By proposition \ref{prop_HO_Mehler_all_times} and positivity of $(a^w)^*a^w$, we see that $\mu_j/i \in (0,1)$ for all $j$. The conclusion of theorem \ref{thm_higher_dimension} follows from proposition \ref{prop_HO_Mehler_all_times}.
\end{proof}

By \cite[Theorem 4.2]{Hormander_1995}, when $q$ is elliptic and $t > 0$ the Weyl symbol of $e^{-tq^w(x,D_x)}$ is a constant times an integrable Gaussian, and we may therefore compute the corresponding operator norm.

\begin{corollary}\label{cor_higher_dim_Mehler}
	Let $Q = q^w(x,D_x)$ for $q:\Bbb{R}^{2n} \to \Bbb{C}$ quadratic and elliptic in the sense that $\Re q(x,\xi) > 0$ for all $(x,\xi) \in \Bbb{R}^{2n} \backslash \{0\}$. Then, for all $t > 0$, the norm of $e^{-tQ}$ as an operator in $\mathcal{L}(L^2(\Bbb{R}^{n}))$ may be computed by applying theorem \ref{thm_higher_dimension} to the Mehler formula \eqref{eq_Mehler}.
\end{corollary}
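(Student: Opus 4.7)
The plan is to reduce the corollary to a direct application of Theorem \ref{thm_higher_dimension} by verifying that, under the stated hypotheses, the Weyl symbol of $e^{-tQ}$ provided by the Mehler formula satisfies the integrability hypothesis of that theorem.

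First, I would invoke \eqref{eq_Mehler}: if $F = -\tfrac{1}{2}J\operatorname{Hess}(q)$ is the fundamental matrix of $q$, then the symbol of $e^{-tQ}$ is
\[
	M_{-tq}(x,\xi) = \frac{1}{\sqrt{\det \cos(tF)}}\exp\bigl(\sigma((x,\xi),-\tan(tF)(x,\xi))\bigr),
\]
which, using $\sigma(X,Y) = X\cdot JY$, can be recast in the form
\[
	M_{-tq}(x,\xi) = c(t,q)\exp\left(-\tfrac{1}{2}(x,\xi)\cdot A(t,q)(x,\xi)\right)
\]
for $A(t,q) = 2J\tan(tF)$ and an explicit scalar $c(t,q)$. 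The symmetry of $A(t,q)$ follows from $J^{T} = -J$ together with the symmetry properties of $F$ relative to $\sigma$.

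Second, I would verify that $\Re A(t,q)$ is positive definite, which is exactly the integrability condition required by Theorem \ref{thm_higher_dimension}. When $q$ is elliptic, H\"ormander's Theorem 4.2 in \cite{Hormander_1995} (already cited in the excerpt) guarantees that the real part of the exponent in the Mehler formula of $e^{-tQ}$ is negative definite for all $t>0$, so the Gaussian symbol is integrable on $\mathbb{R}^{2n}$ and the hypotheses of Theorem \ref{thm_higher_dimension} are met with $A := A(t,q)$.

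Finally, since $(e^{-tQ})^* = e^{-t Q^*}$ has Weyl symbol $\overline{M_{-tq}}$, the sharp product and the computation of $D$ and $B$ in Theorem \ref{thm_higher_dimension} apply directly to $A = A(t,q)$, and the stated formula for $\|e^{-tQ}\|$ results. The only nontrivial step is verifying positive definiteness of $\Re A(t,q)$, which I would expect to be the main obstacle had it not already been supplied by \cite[Theorem 4.2]{Hormander_1995}; with that invoked, the corollary is essentially an immediate specialization of Theorem \ref{thm_higher_dimension}.
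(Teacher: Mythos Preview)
Your proposal is correct and matches the paper's own justification essentially word for word: the paper simply notes (in the sentence preceding the corollary) that by \cite[Theorem 4.2]{Hormander_1995} the Mehler symbol of $e^{-tQ}$ for elliptic $q$ and $t>0$ is a constant times an integrable Gaussian, so Theorem~\ref{thm_higher_dimension} applies. Your write-up is in fact more detailed than the paper's, which does not spell out the form $A(t,q)=2J\tan(tF)$ or the role of the scalar prefactor $c(t,q)$.
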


\begin{remark}\label{rem_higher_dim}
The formula obtained from theorem \ref{thm_higher_dimension} does not immediately give a simple expression for the norm: for instance, theorem \ref{thm_NSAHO_norm} does not automatically reveal certain simple facts like that $\|e^{-tQ_0}\| = e^{-\Re t}$ whenever $\Re t \geq 0$.

For another example, we recall the class of operators considered in \cite{Aleman_Viola_2014a}, with symbols
\[
	q(x,\xi) = \frac{1}{2}M(\xi + ix)\cdot (\xi - ix), \quad (x,\xi) \in \Bbb{R}^{2n},
\]
for $M$ an $n$-by-$n$ matrix. Writing $D_x = -i\nabla_x$, the corresponding Weyl quantization is
\[
	Q(x,D_x) = \frac{1}{2}M(D_x + ix) \cdot (D_x - ix) + \frac{1}{2}\opnm{Tr} M.
\]
The operator $e^{-tQ}$, realized in a way similar to that of proposition \ref{prop_NSAHO_FBI_side}, is bounded on $L^2(\Bbb{R}^n)$ if and only if $\|e^{-tM}\| \leq 1$, in which case 
\begin{equation}\label{eq_norm_M}
	\|e^{-tQ}\| = e^{-\frac{1}{2}\Re(t\opnm{Tr}M)}.
\end{equation}
(This is the case $N = 0$ of \cite[Corollary 2.9]{Aleman_Viola_2014a}.) It seems possible that one can deduce this result via theorem \ref{thm_higher_dimension}, but it certainly does not seem easy.

Nearly any such example shows that the boundary of the set of $t \in \Bbb{C}$ where $e^{-tQ}$ is bounded does not coincide with those $t \in \Bbb{C}$ for which $\|e^{-tQ}\| = 1$, even though this is the case for the non-self-adjoint harmonic oscillator. An important and interesting example is the Fokker-Planck quadratic model \cite[Section 5.5]{Helffer_Nier_2005}, given by
\[
	M = \left(\begin{array}{cc} 0 & -a \\ a & 1\end{array}\right), \quad a \in \Bbb{R}\backslash \{0\}.
\]
For this operator, the study of the return to equilbrium and the boundary of the set where $e^{-tQ}$ is bounded can be quite complicated; see, for instance, \cite[Section 1.2.2]{Aleman_Viola_2014b}. As soon as the operator is bounded, however, the computation of the operator norm is trivial by \eqref{eq_norm_M}. In particular, $\|e^{-tQ}\| = 1$ only if $e^{-tQ}$ is bounded and $\{\Re t = 0\}$, which only very rarely concides with the boundary of the set where $e^{-tQ}$ is bounded.
\end{remark}

\bibliographystyle{abbrv}
\bibliography{BibMehler}

\end{document}